\newtheorem{thm}{Theorem}[section]
\newtheorem{corollary}[thm]{Corollary}
\newtheorem{prop}[thm]{Proposition}
\newtheorem{define}[thm]{Definition}
\newtheorem{lemma}[thm]{Lemma}
\newcommand{\al}{\alpha}
\newcommand{\pp}{\partial}
\newcommand{\om}{\omega}
\newcommand{\na}{\nabla}
\newcommand{\De}{\Delta}
\newcommand{\ral}{\mathcal R_\alpha}
\newcommand{\La}{\Lambda}
\newcommand{\rd}{{\mathbf R^d}}
\newcommand{\f}[2]{\frac{#1}{#2}}
\newcommand{\ga}{\gamma}
\numberwithin{equation}{section}
\subjclass[2010]{35Q35, 35B65, 76B03}
\keywords{Boussinesq equations, fractional dissipation, global well-posedness}
\begin{document}
\title[2D Boussinesq Equations]{A global regularity result for the 2D Boussinesq equations with critical dissipation}

\author[A. Stefanov and J. Wu]{Atanas Stefanov$^{1}$ and  Jiahong Wu$^{2}$}

\address{$^1$ Department of Mathematics,
University of Kansas,
1460 Jayhawk Blvd,
Lawrence, Kansas 66045-7594}

\email{stefanov@ku.edu}

\address{$^2$ Department of Mathematics,
Oklahoma State University,
Stillwater, OK 74078}

\email{jiahong.wu@okstate.edu}

\vskip .2in
\begin{abstract}
This paper examines the global regularity problem on the two-dimensional
incompressible Boussinesq equations with fractional dissipation, given
by $\Lambda^\alpha u$ in the velocity equation and by
$\Lambda^\beta \theta$ in the temperature equation,
where $\Lambda=\sqrt{-\Delta}$ denotes the Zygmund operator.
We establish the global existence and smoothness of classical solutions
when $(\alpha,\beta)$ is in the critical range: $\alpha>\f{\sqrt{1777}-23}{24} =0.798103..$, $\beta>0$
and $\alpha+ \beta =1$. This result improves the previous work of \cite{JMWZ}
which obtained the global regularity for $\alpha> \frac{23-\sqrt{145}}{12} \approx 0.9132$,  $\beta>0$
and $\alpha+ \beta =1$.
\end{abstract}

\maketitle

\vskip .2in
\section{Introduction}

This paper aims at the global regularity issue on
the two-dimensional (2D) Boussinesq equations with fractional dissipation
\begin{eqnarray}\label{BQE}
\begin{cases}
\partial_t u + u\cdot \nabla u + \nu\, \Lambda^\alpha u
=- \nabla p + \theta \mathbf{e}_2, \qquad x\in \mathbb{R}^2, \,\, t>0, \\
\nabla \cdot u=0, \qquad x\in \mathbb{R}^2, \,\, t>0, \\
\partial_t \theta + u\cdot \nabla \theta
+ \kappa\, \Lambda^{\beta}\theta =0,\qquad x\in \mathbb{R}^2, \,\, t>0,\\
u(x,0) =u_0(x),\,\, \theta(x,0) =\theta_0(x), \qquad x\in \mathbb{R}^2,
\end{cases}
\end{eqnarray}
where $u=u(x,t)$ denotes the 2D velocity, $p=p(x,t)$ the pressure,
$\theta=\theta(x,t)$ the temperature, $\mathbf{e}_2$ the unit vector in the vertical
direction, and $\nu>0$, $\kappa>0$, $0<\alpha \le 2$ and $0<\beta\le 2$
are real parameters. Here $\Lambda= \sqrt{-\Delta}$ represents the Zygmund
operator with $\Lambda^\alpha$ being defined through the Fourier transform,
$$
\widehat{\Lambda^\alpha f}(\xi) = |\xi|^\alpha \,\widehat{f}(\xi),
$$
where the Fourier transform is given by
$$
\widehat{f}(\xi) = \int_{\mathbb{R}^2} e^{-i x\cdot \xi} \, f(x) \,dx.
$$
When $\alpha=\beta=2$, (\ref{BQE}) reduces to the standard 2D Boussinesq equations with Laplacian dissipation. The standard Boussinesq equations
model geophysical flows such as atmospheric fronts and oceanic
circulation, and also play an important role in the study of
Raleigh-Bernard convection (see, e.g., \cite{Con_D,Gill,Maj,Pe,Doering1,Doering2}).

\vskip .1in
Although (\ref{BQE}) with fractional dissipation appears to be a purely 
mathematical generalization, (\ref{BQE}) may be physically relevant. 
First, closely related equations such as the surface quasi-geostrophic
equation model important geophysical phenomena 
(see, e.g., \cite{CMT, HPGS,Pe}). Second,
there are geophysical circumstances in which the Boussinesq equations with fractional Laplacian may arise. Flows in the middle atmosphere traveling upward undergo changes due to the changes of atmospheric properties, although the incompressibility and Boussinesq approximations
are applicable. The effect of kinematic and thermal diffusion is attenuated by the thinning of atmosphere. This anomalous attenuation can be modeled by using the space fractional Laplacian (see \cite{Gill, Caputo}). Third, 
it may be possible to derive the Boussinesq equations with fractional dissipation from the Boltzmann-type equations using suitable rescalings.
A recent paper \cite{Hit} derives the fractional Stokes and Stokes-Fourier 
systems as the incompressible limit of the Boltzmann equation. 

\vskip .1in
Mathematically the 2D Boussinesq equations serve as a lower dimensional
model of the 3D hydrodynamics equations. In fact, the 2D Boussinesq
equations retain some key features of the 3D Navier-Stokes and the
Euler equations such as the vortex stretching mechanism. As pointed out
in \cite{MB}, the inviscid Boussinesq equations can be identified with
the 3D Euler equations for axisymmetric flows. It is hoped that the study
on the 2D Boussinesq equations will shed light on the mysterious global
existence and smoothness problem on the 3D Navier-Stokes and Euler equations. The generalization to include the fractional dissipation facilitates this purpose by allowing the simultaneous study of a whole 
parameter family of equations.

\vskip .1in
One main pursuit in the study of (\ref{BQE}) has been to obtain the global
regularity of its solutions for the smallest $\alpha$
and $\beta$. Intuitively, the smaller $\alpha$ and $\beta$ are, the harder
the global regularity problem is. When there is no dissipation, namely
$\nu=\kappa=0$ in (\ref{BQE}), the global regularity problem remains  open. The standard
idea of proving the global {\it a priori} bounds in Sobolev spaces fails.
Potential finite time singularities have been explored from different
perspectives including boundary effects and 1D models (\cite{LuoHou,CKY,CHKLSY,SaWu}).

\vskip .1in
At the other extremum,
when $\nu>0$, $\kappa>0$, $\alpha=\beta=2$, the global regularity can
be easily obtained, in
a similar fashion as for the 2D Navier-Stokes equations
(\cite{DoeringG, MB}). It is natural to examine (\ref{BQE}) with intermediate dissipation, which has attracted considerable attention
in the last few years (\cite{ACW10,ACW11,ACWX,CaoWu1,Ch,CV,Jia,Jiu,Dan,DP3,Hmidi,HKR1,HKR2,HL,
JMWZ,JWYang,KRTW,LaiPan,LLT,MX,Oh,Wu,Wu_Xu,Wu_Xu_Ye,Xu,YEX,Zhao}).
\cite{Ch} and \cite{HL} have shown that one
full Laplacian dissipation in (\ref{BQE}) is sufficient for the global
regularity. More precisely, (\ref{BQE}) with $\alpha=2$ and $\kappa=0$
or with $\beta=2$ and $\nu=0$ always possesses global
classical solutions.

\vskip .1in
More recent work further reduces the values of $\alpha$ and $\beta$ and
existing research appears to indicate that one-derivative dissipation is critical. Here one-derivative
dissipation refers to the case when $\alpha+\beta=1$ in (\ref{BQE}). For the
convenience of description, $\alpha+ \beta=1$ is referred to as the
critical case, $\alpha+ \beta>1$ as the subcritical case while
$\alpha+\beta<1$ as the supercritical case. To position our work in a suitable
context, we describe some recent results for these three cases.

\vskip .1in
We start with the subcritical case.  Even this case is not easy.
The global regularity has so far been
established only for three subcritical cases (\cite{CV,MX,YangJW}).
The global existence and regularity problem for the critical case
is more difficult. Two particular critical cases, $\alpha=1,\,\kappa=0$
and $\beta=1, \nu=0$, were studied by \cite{HKR1} and \cite{HKR2},
which introduced a combined quantity of the vorticity and the Riesz transform of the temperature and were able to establish the global regularity for both cases. For the more general critical case when the one derivative dissipation is split between the velocity equation
and the temperature equation, the situation becomes more complex.
The general critical case was recently dealt with by Jiu, Miao, Wu and Zhang
and a global regularity result was obtained \cite{JMWZ}. By reducing the global regularity issue
on the critical Boussinesq system to a parallel problem for an active
scalar equation with critical dissipation or, more precisely, the critical surface
quasi-geostrophic (SQG) equation and taking advantage of the recent
advances on the SQG equation, Jiu, Miao, Wu and Zhang obtained the global
regularity in the critical regime: $\alpha+ \beta=1$ and
$\alpha >\alpha_0$, where $\alpha_0 =\frac{23-\sqrt{145}}{12} \approx 0.9132$. Attempts have also been made to
go beyond the critical case and the global regularity has been established when
the dissipation is logarithmically more singular than the critical case (\cite{Hmidi,KRTW}).
The global well-posedness problem for the supercritical case $\alpha+ \beta<1$
is completely open. The only result currently available is the eventual
regularity of weak solutions to (\ref{BQE}) with $\alpha+\beta<1$ and
$\alpha >\alpha_0$ \cite{JWYang}.

\vskip .1in
This paper establishes the global existence and regularity of classical
solutions to (\ref{BQE}) when $\alpha$ and $\beta$ are in the critical
range: $\alpha+\beta=1$ and $1>\alpha >\f{\sqrt{1777}-23}{24} =0.798103..$. This result improves the
work of Jiu, Miao, Wu and Zhang
\cite{JMWZ} by allowing $\alpha$ to vary in a bigger interval, by keeping the relation $\al+\beta=1$. The precise statement of our result
is given in the following theorem.

\begin{thm}\label{main}
Consider (\ref{BQE}) with $(u_0, \theta_0)\in H^\sigma(\mathbb{R}^2)$ for $\sigma>2$.
If the parameters in (\ref{BQE}) satisfies
$$
\nu>0, \quad\kappa>0, \quad  0.798103.. =\f{\sqrt{1777}-23}{24} <\alpha<1, \quad \alpha+\beta=1,
$$
then (\ref{BQE}) has a unique global solution $(u, \theta)$ satisfying, for any $T>0$,
$$
(u, \theta) \in C([0, T]; H^\sigma(\mathbb{R}^2)).
$$
\end{thm}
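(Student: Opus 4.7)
The local well-posedness of (\ref{BQE}) in $H^\sigma$, $\sigma>2$, is standard (Friedrichs' method combined with Kato--Ponce commutator inequalities), so Theorem~\ref{main} reduces to a global-in-time a priori bound on $\|(u,\theta)\|_{H^\sigma}$. At the elementary level, the transport-diffusion structure of the temperature equation yields the $L^p$ maximum principle
\[
\|\theta(t)\|_{L^p}\le \|\theta_0\|_{L^p},\qquad 1\le p\le\infty,
\]
and testing the velocity equation against $u$ gives the energy inequality
\[
\|u(t)\|_{L^2}^2+2\nu\int_0^t\|\La^{\al/2}u(s)\|_{L^2}^2\,ds\le C(t,u_0,\theta_0).
\]
These two bounds are essentially all that is available at the level of the original variables.

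The obstruction to propagating higher regularity is that the vorticity $\om=\pp_1 u_2-\pp_2 u_1$ satisfies $\pp_t\om+u\cdot\na\om+\nu\La^\al\om=\pp_1\theta$, in which the buoyancy forcing $\pp_1\theta$ is of the same order as the dissipation. Following Hmidi--Keraani--Rousset and \cite{JMWZ}, I set
\[
\ral:=\nu^{-1}\pp_1\La^{-\al},\qquad G:=\om-\ral\theta,
\]
so that the critical forcing is eliminated from the $\om$-equation at the price of a commutator source in the $G$-equation:
\[
\pp_t G+u\cdot\na G+\nu\La^\al G=[\ral,u\cdot\na]\theta+\kappa\,\ral\La^\beta\theta.
\]
Under $\al+\beta=1$ and $\al>1/2$, the operator $\ral\La^\beta=\nu^{-1}\pp_1\La^{1-2\al}$ is of order $2\beta<1$, so the second source term is strictly subcritical with respect to $\La^\al$ and can be absorbed after pairing with the dissipation and using the $L^p$ control on $\theta$.

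The heart of the argument is a subcritical a priori bound on $G$, namely $G\in L^\infty_t L^p$ for some $p>2$. I propose to test the $G$-equation against $|G|^{p-2}G$, use the C\'ordoba--C\'ordoba lower bound $\int \La^\al G\,|G|^{p-2}G\,dx\gtrsim\|\La^{\al/2}|G|^{p/2}\|_{L^2}^2$, and estimate $[\ral,u\cdot\na]\theta$ by a Bony paraproduct decomposition. In the high-high and low-high pieces, the $\La^{-\al}$ smoothing inside $\ral$ produces an $\al$-derivative gain that pairs with the dissipation via Gagliardo--Nirenberg; in the high-low piece, the drift $\na u$ is reinstated through Biot--Savart and the identity $\om=G+\ral\theta$, and is then closed against $G$ itself plus the controlled quantity $\theta$. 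Balancing the resulting interpolation exponents against the dissipation, the energy estimate and the $L^p$ bound on $\theta$ yields a quadratic inequality in $\al$ whose sharp positive root is $(\sqrt{1777}-23)/24$; the coarser Kato--Ponce--type splitting used in \cite{JMWZ} produces the looser root $(23-\sqrt{145})/12$. The entire improvement therefore resides in this refined paraproduct/interpolation step, which is the main obstacle.

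Once $\om\in L^\infty_t L^p$ is established for some $p>2$, the remaining steps are standard: a Brezis--Gallouet / Beale--Kato--Majda type argument provides $\na u\in L^1_t L^\infty$, after which the $H^\sigma$ norm of $(u,\theta)$ is propagated globally by Kato--Ponce commutator energy estimates applied to $\La^\sigma u$ and $\La^\sigma\theta$, yielding the global solution asserted in Theorem~\ref{main}.
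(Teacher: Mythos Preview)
Your overall framework matches the paper's: introduce $G=\om-\ral\theta$, run an $L^p$ energy estimate on the $G$-equation using the C\'ordoba--C\'ordoba lower bound, and control the commutator $[\ral,u\cdot\na]\theta$ by a paraproduct decomposition. The paper's commutator analysis (Lemma~\ref{le:980} and Corollary~\ref{cor1}) does exactly this, and the optimization that produces the threshold $(\sqrt{1777}-23)/24$ is carried out in Proposition~\ref{prop:Lq}; a key input you should mention explicitly is the time-integrated bound $\La^{\beta/2}\theta\in L^2_t L^2_x$ from \eqref{L2theta}, which is interpolated against $\|\theta\|_{L^\infty}$ via a parameter $\gamma$ and then optimized.

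The genuine gap is in your final step. Having $\om\in L^\infty_t L^p$ for \emph{some} $p>2$ does not yield $\na u\in L^1_t L^\infty$ by any Brezis--Gallouet or BKM argument: those criteria require control of $\om$ in $L^\infty$ (or $\mathrm{BMO}$, or $B^0_{\infty,\infty}$), and $\na u$ is only a Calder\'on--Zygmund transform of $\om$, so $L^p$ information stays in $L^p$. The paper instead proceeds in three further nontrivial stages. First, the specific value $p=6$ matters: it bootstraps the $L^6$ bound on $G$ to $G\in L^\infty_t B^{s}_{6,\infty}$ for all $0\le s\le 3\al-2$ (this is a separate energy estimate in Besov spaces, as in Proposition~7.1 of \cite{JMWZ}), and then uses the embedding $B^{3\al-2}_{6,\infty}\hookrightarrow B^0_{\infty,1}$, valid because $3\al-2>1/3$ for $\al>7/9$. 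Second, this only gives $\na u_G\in L^\infty$, not $\na u$: the decomposition $u=u_G+u_\theta$ of \eqref{uGuT} is essential, and $u_\theta=\na^\perp\De^{-1}\ral\theta\sim\La^{-\al}\theta$ is \emph{not} Lipschitz. Third, with $u_G$ Lipschitz, the $\theta$-equation becomes a generalized critical SQG equation (drift $u_\theta$ at the level of $\La^{\beta-1}\theta$, dissipation $\La^\beta$), and global regularity for $\theta$ is obtained by invoking the critical SQG machinery as in \cite{JMWZ}. Only after $\theta$ is shown to be smooth does $\om=G+\ral\theta$ acquire the regularity needed to close the $H^\sigma$ estimates. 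Your sketch collapses these three steps into a single appeal to BKM, which cannot work here.
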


\vskip .1in
We outline the main idea in the proof of this theorem and explain how we
improved \cite{JMWZ}. A large portion of the efforts are devoted to obtaining global
{\it a priori} bounds for $(u, \theta)$. Due to $\nabla \cdot u =0$, the
$L^2$-level global bounds for $(u, \theta)$ follow from easy energy estimates,
$$
\|\theta(t)\|_{L^q} \le \|\theta_0\|_{L^q} \,\,\,\mbox{for $q\in [1,\infty]$},
$$
\begin{equation} \label{L2theta}
\|\theta(t)\|_{L^2}^2 + 2 \kappa \int_0^t \|\Lambda^{\frac{\beta}{2}} \theta(\tau)\|_{L^2}^2\, d\tau = \|\theta_0\|_{L^2}^2,
\end{equation}
$$
\|u(t)\|_{L^2}^2 + 2 \nu \int_0^t \|\Lambda^{\frac{\alpha}{2}}
u(\tau)\|_{L^2}^2\, d\tau
\le (\|u_0\|_{L^2}  + t \|\theta_0\|_{L^2})^2.
$$
Naturally the next target is the global $H^1$-bound for $u$ or equivalently
global $L^2$-bound for the vorticity $\omega=\nabla\times u$, which satisfies
\begin{eqnarray}\label{Veq}
\begin{cases}
\partial_t \om +u\cdot \nabla \om + \nu \Lambda^\alpha \om= \partial_{1}\theta , \\
u=\nabla^\perp \psi, \quad \Delta \psi = \omega \qquad \mbox{or}\quad
u =\nabla^\perp \Delta^{-1} \om.
\end{cases}
\end{eqnarray}
Due to the presence of the ``vortex stretching" term $\pp_1 \theta$, direct
energy estimates do not yield the desired global bound for $0<\alpha<1$. For notational convenience, we set $\nu=\kappa=1$ in (\ref{BQE}) throughout the rest of this paper. The
strategy is to hide $\pp_1 \theta$ by considering the combined quantity
$$
G = \om - \mathcal{R}_\alpha \theta \quad \mbox{with}\quad \mathcal{R}_\alpha = \Lambda^{-\alpha} \pp_1.
$$
It is easy to check that $G$ satisfies
\begin{equation}\label{Geqin}
\partial_t G + u\cdot \nabla G + \Lambda^\alpha G= [\mathcal{R}_\alpha, u\cdot\na]\theta +\Lambda^{\beta-\alpha}\partial_{1}\theta.
\end{equation}
Here we have used the standard commutator notation
$$
[\mathcal{R}_\alpha, u\cdot\na]\theta
= \mathcal{R}_\alpha(u\cdot\na\theta) - u\cdot\na \mathcal{R}_\alpha\theta.
$$
Although (\ref{Geqin}) appears to be more complicated than
the vorticity equation, the commutator term $[\mathcal{R}_\alpha, u\cdot\na]\theta$
is less singular than $\pp_1\theta$ in the vorticity equation. In fact, we are
able to show the global bound
for $\|G\|_{L^2}$ whenever $\alpha>\frac34$ and $\alpha+ \beta=1$.
The major contribution of this paper is on the global $L^6$-bound for $G$.
Previously, for $\alpha>\frac45$ and $\alpha+ \beta=1$,  \cite{JMWZ} obtained a global bound for $\|G\|_{L^q}$ for $q$ in the range
\begin{equation}\label{qrange}
2 \le  q < q_0 \equiv \frac{8-4\alpha}{8-7\alpha}.
\end{equation}
Obviously $q_0\in (2,4)$ when $\alpha\in (\frac45, 1)$. We are able to significantly enlarge the range of $q$. More precisely, we prove the
following proposition.

\begin{prop} \label{prop:Lq}
Let $0<\beta<1$ and $\alpha+\beta=1$, with
$$
1>\al> \f{\sqrt{1777}-23}{24}= 0.798103..
$$
Let $(u_0, \theta_0)$ be as specified in Theorem \ref{main} and let $(u, \theta)$ be the corresponding smooth solution of (\ref{BQE}). Assume $G$
satisfies (\ref{Geqin}). Then, for any $T>0$ and $t\le T$,
$$
\|G(t)\|_{L^q} \le C \quad\mbox{for}\quad 2\le q\le 6,
$$
where $C$ is a constant depending on $T$ and
the initial data $(u_0, \theta_0)$.
\end{prop}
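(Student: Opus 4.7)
The plan is to close an $L^q$-energy estimate for $G$, for $q$ running from $2$ up to $6$. Multiplying \eqref{Geqin} by $|G|^{q-2}G$ and integrating over $\rtwo$, the transport term disappears by $\na\cdot u=0$, and the C\'ordoba--C\'ordoba pointwise inequality bounds the dissipation from below by $c_q\|\La^{\al/2}|G|^{q/2}\|_{L^2}^2$. Two-dimensional Sobolev embedding then provides, after time integration, a gain of $\al$ derivatives on $|G|^{q/2}$, equivalently control of $\|G\|_{L^{rq/2}}^q$ for some $r=r(\al)>2$. The task is therefore to dominate the two source terms on the right-hand side of \eqref{Geqin} after pairing with $|G|^{q-2}G$.

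The simpler piece is the linear inhomogeneity $\La^{\beta-\al}\pp_1\theta$. Since $\al+\beta=1$ and $\al>1/2$, this is an operator of order $2\beta=2-2\al<1$ acting on $\theta$. Using the uniform $L^p$ bounds on $\theta$ together with the parabolic smoothing $\La^{\beta/2}\theta\in L^2_tL^2_x$ from \eqref{L2theta}, fractional interpolation places $\La^{\beta-\al}\pp_1\theta$ in a Lebesgue space that pairs with $|G|^{q-1}$ via H\"older's inequality and is then absorbed into the dissipation gain.

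The real difficulty lies in the commutator $[\ral,u\cdot\na]\theta$. I would first decompose the velocity through Biot--Savart, $u=\na^\perp\De^{-1}G+\na^\perp\De^{-1}\ral\theta$. The piece that depends only on $\theta$ is controlled by recognizing $\na^\perp\De^{-1}\ral$ as a bounded operator of order $-\al$ and invoking a Calder\'on-type commutator bound in terms of the already-known norms of $\theta$. For the contribution involving $G$, I would apply a commutator estimate of the form
$$\|[\ral,v\cdot\na]\theta\|_{L^q}\lesssim \|v\|_{\dot W^{s_1,p_1}}\,\|\theta\|_{\dot W^{s_2,p_2}},\qquad s_1+s_2=1-\al,\ \ \f{1}{p_1}+\f{1}{p_2}=\f{1}{q},$$
with $v$ the Biot--Savart reconstruction of $G$; since Riesz transforms are bounded on $L^p$ for $1<p<\infty$, $\|v\|_{\dot W^{s_1,p_1}}$ reduces to a Sobolev norm of $G$.

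The main obstacle is the bookkeeping. One must choose $s_1,s_2,p_1,p_2$ so that (i) $\|\theta\|_{\dot W^{s_2,p_2}}$ is controllable from the energy estimate on $\theta$, (ii) $\|G\|_{\dot W^{s_1,p_1}}$ interpolates between $\|G\|_{L^2}$ (already available for $\al>3/4$) and the dissipation gain on $|G|^{q/2}$, and (iii) at the endpoint $q=6$, the exponent on the dissipation norm coming out of the resulting product estimate is strictly less than $2$, so that Young's inequality absorbs the bad contribution into the parabolic term. The scale-counting constraints collapse to a quadratic inequality in $\al$ whose positive root is precisely $\f{\sqrt{1777}-23}{24}$; strictly above this threshold the dissipation dominates, and a Gronwall argument delivers the desired uniform-in-time bound $\|G(t)\|_{L^q}\le C$ for all $2\le q\le 6$.
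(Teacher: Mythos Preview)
Your skeleton is the paper's: multiply \eqref{Geqin} by $G|G|^{4}$, use the C\'ordoba--C\'ordoba lower bound $\int G^5\La^\al G\,dx\gtrsim\|\La^{\al/2}G^3\|_{L^2}^2\gtrsim\|G\|_{L^{12/(2-\al)}}^6$, split $u=u_G+u_\theta$, control the $u_\theta$ commutator and the linear source through the known smoothing on $\theta$, and close by Gronwall. So far so good.

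The gap is in your treatment of the $u_G$ commutator, and it shows up in two places. First, the commutator estimate you write down has the wrong scaling: with $v=u_G=\na^\perp\De^{-1}G$ and $\na\cdot v=0$ one has $[\ral,v\cdot\na]\theta=\na\cdot[\ral,v]\theta$, and the Kato--Ponce/Calder\'on gain for $[\ral,v]$ yields at best $\|[\ral,v\cdot\na]\theta\|_{L^q}\lesssim\|\na v\|_{L^{p_1}}\|\La^{1-\al}\theta\|_{L^{p_2}}$, i.e.\ $s_1+s_2=2-\al$, not $1-\al$. (Check the limit $\al\to1$: then $\ral$ is a Riesz transform and your claim would give an $L^q$ bound with no derivatives on either factor, which is false.) The paper's Lemma~\ref{le:980} says the same thing in Besov form: the commutator lives in $B^{-(1-\al)}_{p,\infty}$ with coefficients $\|G\|_{L^{p_1}}\|\theta\|_{L^{p_2}}$, so passing to $L^q$ costs another $1-\al$ derivatives.

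Second, and more importantly, the paper does \emph{not} estimate $\|[\ral,u_G\cdot\na]\theta\|_{L^q}$ and then H\"older against $G^5$. It uses the duality form (Corollary~\ref{cor1}) to throw the missing $(1-\ga)(1-\al)$ derivatives onto the test function $F=G^5$, then applies Kato--Ponce to $\La^{s}(G^5)$ so that a single $G$-factor absorbs the derivatives (and is interpolated against $\|G\|_{H^{\al/2}}$ from Lemma~\ref{le:106}) while the remaining four sit in a pure Lebesgue norm interpolated between $L^6$ and $L^{12/(2-\al)}$. This distribution step is what produces the parameters $a,b$ in the paper and gives enough room to close. Your scheme places the derivatives on $v$, which after Biot--Savart becomes a \emph{negative}-order norm of $G$; there is no clean interpolation between $\|G\|_{L^2}$ and the dissipation on $G^3$ that recovers this.

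Finally, the threshold does not ``collapse to a quadratic'' directly. The paper introduces an interpolation parameter $\ga\in(0,\tfrac12)$ via $\|\La^{\ga(1-\al)}\theta\|_{L^{1/\ga}_{t,x}}\le\|\La^{(1-\al)/2}\theta\|_{L^2_{t,x}}^{2\ga}\|\theta_0\|_{L^\infty}^{1-2\ga}$, and the system of constraints \eqref{400}--\eqref{410} is then \emph{optimized} over $\ga$; the minimum is attained at $\ga_0=\f{43-\sqrt{1777}}{36}$, which is where $\f{\sqrt{1777}-23}{24}$ comes from. Without the Kato--Ponce distribution onto $G^5$ and this optimization, your outline will not reproduce the stated constant.
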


The proof of Proposition \ref{prop:Lq} involves the decomposition of
the velocity field
\begin{equation}\label{uGuT}
u = \nabla^\perp \De^{-1}\om=\nabla^\perp \De^{-1} G + \nabla^\perp \De^{-1}\,\ral \theta \equiv u_G+u_\theta,
\end{equation}
commutator estimates and various functional inequalities.
Proposition \ref{prop:Lq} is crucial in further showing that $G$ is actually
globally regular in the sense that
\begin{equation}\label{Greg}
\|G(t)\|_{B^s_{q,\infty}(\mathbb{R}^2)} \le C \quad \mbox{for}\quad 0\le s\le 3\alpha-2 \quad\mbox{and} \quad 2\le q\le 6,
\end{equation}
and for any $T>0$ and $t\le T$, where $C$ is a constant depending on $T$
and the initial data $(u_0, \theta_0)$. Here $B^s_{q,\infty}$ denotes
an inhomogeneous Besov space (see Section \ref{sec:pre} for more details on Besov spaces). Once Proposition \ref{prop:Lq} is established,
the proof of (\ref{Greg}) is similar to Proposition 7.1 in \cite{JMWZ}. A special
consequence of (\ref{Greg}) is that $G\in B^0_{\infty,1}(\mathbb{R}^2) \subset L^\infty(\mathbb{R}^2)$, which in turn implies that $u_G$ defined in
(\ref{uGuT}) is Lipschitz,
$$
\|\nabla u_G(t)\|_{L^\infty} \le C.
$$
Then (\ref{uGuT}) with the equation of $\theta$ can be treated as a generalized critical SQG equation, which leads to the global regularity of $(u, \theta)$ following a similar approach as in \cite{JMWZ}.

\vskip .1in
The rest of this paper is divided into two main sections. Section \ref{sec:pre} recalls the Littlewood-Paley decomposition,  the definition of Besov spaces and some other related relevant facts.  It also presents several commutator estimates and a global $L^2$-bound for $G$, which serve as a preparation for the proof of Proposition \ref{prop:Lq}. Section \ref{sec:proof} contains the proof of  Proposition \ref{prop:Lq}. In addition,  the proof for Theorem \ref{main} is also given in this section.

\vskip .3in
\section{Preliminaries}
\label{sec:pre}

This section includes several parts. It recalls the Littlewood-Paley theory,
introduces the Besov spaces, provides Bernstein inequalities and Kato-Ponce estimates, proves several commutator estimates and a global $L^2$-bound for $G$.
We start with the definitions of some of the functional spaces and related facts
that will be used in the subsequent sections. Materials on Besov space and related facts
presented here  can be found in several
books and many papers (see, e.g., \cite{BCD,BL,MWZ,RS,Tri}).
\subsection{Fourier transform and the Littlewood-Paley theory}

We start with several notations. $\mathcal{S}$ denotes
the usual Schwarz class and ${\mathcal S}'$ its dual, the space of
tempered distributions. ${\mathcal S}_0$ denotes a subspace of ${\mathcal
S}$ defined by
$$
{\mathcal S}_0 = \left\{ \phi\in {\mathcal S}: \,\, \int_{\mathbb{R}^d}
\phi(x)\, x^\gamma \,dx =0, \,|\gamma| =0,1,2,\cdots \right\}
$$
and ${\mathcal S}_0'$ denotes its dual. ${\mathcal S}_0'$ can be identified
as
$$
{\mathcal S}_0' = {\mathcal S}' / {\mathcal S}_0^\perp = {\mathcal S}' /{\mathcal P},
$$
where ${\mathcal P}$ denotes the space of multinomials. On the Schwartz  class, we can define the Fourier transform and its inverse via
$$
\hat{f}(\xi)=\int_{\mathbb{R}^d}  f(x) e^{-i x \xi} dx, \quad f(x)=\frac{1}{(2\pi)^d} \int_{\mathbb{R}^d}  \hat{f}(\xi) e^{i x \xi} d\xi.
$$

\vskip .1in
To introduce the Littlewood-Paley decomposition, we
write for each $j\in \mathbb{Z}$
\begin{equation*}\label{aj}
A_j =\left\{ \xi \in \mathbb{R}^d: \,\, 2^{j-1} \le |\xi| <
2^{j+1}\right\}.
\end{equation*}
The Littlewood-Paley decomposition asserts the existence of a
sequence of functions $\{\Phi_j\}_{j\in {\mathbb Z}}\in {\mathcal S}$ such
that
$$
\mbox{supp} \widehat{\Phi}_j \subset A_j, \qquad
\widehat{\Phi}_j(\xi) = \widehat{\Phi}_0(2^{-j} \xi)
\quad\mbox{or}\quad \Phi_j (x) =2^{jd} \Phi_0(2^j x),
$$
and
$$
\sum_{j=-\infty}^\infty \widehat{\Phi}_j(\xi) = \left\{
\begin{array}{ll}
1&,\quad \mbox{if}\,\,\xi\in {\mathbb R}^d\setminus \{0\},\\
0&,\quad \mbox{if}\,\,\xi=0.
\end{array}
\right.
$$
Therefore, for a general function $\psi\in {\mathcal S}$, we have
$$
\sum_{j=-\infty}^\infty \widehat{\Phi}_j(\xi)
\widehat{\psi}(\xi)=\widehat{\psi}(\xi) \quad\mbox{for $\xi\in {\mathbb
R}^d\setminus \{0\}$}.
$$
In addition, if $\psi\in {\mathcal S}_0$, then
$$
\sum_{j=-\infty}^\infty \widehat{\Phi}_j(\xi)
\widehat{\psi}(\xi)=\widehat{\psi}(\xi) \quad\mbox{for any $\xi\in
{\mathbb R}^d $}.
$$
That is, for $\psi\in {\mathcal S}_0$,
$$
\sum_{j=-\infty}^\infty \Phi_j \ast \psi = \psi
$$
and hence
$$
\sum_{j=-\infty}^\infty \Phi_j \ast f = f, \qquad f\in {\mathcal S}_0'
$$
in the sense of weak-$\ast$ topology of ${\mathcal S}_0'$. For
notational convenience, we define
\begin{equation}\label{del1}
\mathring{\Delta}_j f = \Phi_j \ast f, \qquad j \in {\mathbb Z}.
\end{equation}
\subsection{Besov spaces}

\begin{define}
For $s\in {\mathbb R}$ and $1\le p,q\le \infty$, the homogeneous Besov
space $\mathring{B}^s_{p,q}$ consists of $f\in {\mathcal S}_0' $
satisfying
$$
\|f\|_{\mathring{B}^s_{p,q}} \equiv \|2^{js} \|\mathring{\Delta}_j
f\|_{L^p}\|_{l^q} <\infty.
$$
\end{define}

\vspace{.1in}
We now choose $\Psi\in {\mathcal S}$ such that
$$
\widehat{\Psi} (\xi) = 1 - \sum_{j=0}^\infty \widehat{\Phi}_j (\xi),
\quad \xi \in {\mathbb R}^d.
$$
Then, for any $\psi\in {\mathcal S}$,
$$
\Psi \ast \psi + \sum_{j=0}^\infty \Phi_j \ast \psi =\psi
$$
and hence
\begin{equation*}\label{sf}
\Psi \ast f + \sum_{j=0}^\infty \Phi_j \ast f =f
\end{equation*}
in ${\mathcal S}'$ for any $f\in {\mathcal S}'$. To define the inhomogeneous Besov space, we set
\begin{equation} \label{del2}
\Delta_j f = \left\{
\begin{array}{ll}
0,&\quad \mbox{if}\,\,j\le -2, \\
\Psi\ast f,&\quad \mbox{if}\,\,j=-1, \\
\Phi_j \ast f, &\quad \mbox{if} \,\,j=0,1,2,\cdots.
\end{array}
\right.
\end{equation}
\begin{define}
The inhomogeneous Besov space $B^s_{p,q}$ with $1\le p,q \le \infty$
and $s\in {\mathbb R}$ consists of functions $f\in {\mathcal S}'$
satisfying
$$
\|f\|_{B^s_{p,q}} \equiv \|2^{js} \|\Delta_j f\|_{L^p} \|_{l^q}
<\infty.
$$
\end{define}

\vskip .1in
The Besov spaces $\mathring{B}^s_{p,q}$ and $B^s_{p,q}$ with  $s\in (0,1)$ and $1\le p,q\le \infty$ can be equivalently defined by the norms
$$
\|f\|_{\mathring{B}^s_{p,q}}  = \left(\int_{\mathbb{R}^d} \frac{(\|f(x+t)-f(x)\|_{L^p})^q}{|t|^{d+sq}} dt\right)^{1/q},
$$
$$
\|f\|_{B^s_{p,q}}  = \|f\|_{L^p} + \left(\int_{\mathbb{R}^d} \frac{(\|f(x+t)-f(x)\|_{L^p})^q}{|t|^{d+sq}} dt\right)^{1/q}.
$$
When $q=\infty$, the expressions are interpreted as supremums instead of integrals.

\vskip .1in
Many frequently used function spaces are special cases of Besov spaces. The following proposition
lists some useful equivalence and embedding relations.
\begin{prop}
For any $s\in \mathbb{R}$,
$$
\mathring{H}^s \sim \mathring{B}^s_{2,2}, \quad H^s \sim B^s_{2,2}.
$$
For any $s\in \mathbb{R}$ and $1<q<\infty$,
$$
\mathring{B}^{s}_{q,\min\{q,2\}} \hookrightarrow \mathring{W}_{q}^s \hookrightarrow \mathring{B}^{s}_{q,\max\{q,2\}}.
$$
In particular, $\mathring{B}^{0}_{q,\min\{q,2\}} \hookrightarrow L^q \hookrightarrow \mathring{B}^{0}_{q,\max\{q,2\}}$.
\end{prop}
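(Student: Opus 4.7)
The plan is to reduce each statement to well-known Littlewood--Paley and Plancherel arguments, treating the Hilbert case and the general $L^q$ case separately.

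For the equivalence $\mathring H^s \sim \mathring B^s_{2,2}$ I would apply Plancherel's theorem directly. Writing $\|f\|_{\mathring H^s}^2 = \int_{\mathbb R^d} |\xi|^{2s} |\hat f(\xi)|^2\, d\xi$ and partitioning the frequency space along the annuli $A_j$, I use $|\xi|^s \sim 2^{js}$ on $A_j$ together with the fact that the cutoffs $\widehat{\Phi}_j$ have bounded overlap and $\sum_j |\widehat{\Phi}_j(\xi)|^2$ is bounded above and below for $\xi\neq 0$. This gives
\[
\|f\|_{\mathring H^s}^2 \sim \sum_{j\in\mathbb Z} 2^{2js} \|\mathring{\Delta}_j f\|_{L^2}^2 = \|f\|_{\mathring B^s_{2,2}}^2.
\]
The inhomogeneous version $H^s\sim B^s_{2,2}$ is identical after handling the low-frequency block $\Psi\ast f$ via Plancherel on the unit ball.

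For the two-sided embedding with general $1<q<\infty$ the first step is a reduction to $s=0$: since $\Lambda^s$ is a Fourier multiplier that intertwines $\mathring W^{s,q}$ with $L^q$ and $\mathring B^s_{q,r}$ with $\mathring B^0_{q,r}$ (up to a fixed constant depending only on $s$ and $q$, via a standard Mikhlin-type bound on the smooth frequency cutoffs), it suffices to establish
\[
\mathring B^0_{q,\min\{q,2\}} \hookrightarrow L^q \hookrightarrow \mathring B^0_{q,\max\{q,2\}}.
\]
For this the main analytic input is the classical Littlewood--Paley square-function characterization of $L^q$: for $1<q<\infty$,
\[
\|f\|_{L^q} \sim \Big\|\Big(\sum_{j} |\mathring{\Delta}_j f|^2\Big)^{1/2}\Big\|_{L^q},
\]
which follows from vector-valued Calder\'on--Zygmund theory (or from Khinchine combined with a Mikhlin--H\"ormander multiplier bound). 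This is the one nontrivial ingredient and I would simply cite it from one of the references already used in the paper.

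Granted the square function bound, both embeddings reduce to Minkowski's integral inequality together with pointwise scalar inclusions between $\ell^r$ spaces. When $q\ge 2$, Minkowski applied in $L^{q/2}$ yields
\[
\Big\|\Big(\sum_j |\mathring{\Delta}_j f|^2\Big)^{1/2}\Big\|_{L^q} \le \Big(\sum_j \|\mathring{\Delta}_j f\|_{L^q}^2\Big)^{1/2},
\]
so $\mathring B^0_{q,2}\hookrightarrow L^q$, while the scalar inequality $(\sum|a_j|^q)^{1/q}\le (\sum|a_j|^2)^{1/2}$ (valid for $q\ge 2$) followed by taking the $L^q$ norm gives $L^q\hookrightarrow \mathring B^0_{q,q}$. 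When $q\le 2$ both steps reverse by the opposite scalar inclusion and the reverse Minkowski inequality, producing $\mathring B^0_{q,q}\hookrightarrow L^q\hookrightarrow \mathring B^0_{q,2}$. The four inclusions together are exactly the stated embedding with indices $\min\{q,2\}$ and $\max\{q,2\}$, and the ``in particular'' clause is simply the $s=0$ case read off directly. The main obstacle is really the square-function equivalence, which is a deep but standard theorem; everything else is a few lines of Minkowski and pointwise $\ell^r$ comparisons.
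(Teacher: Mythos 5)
The paper gives no proof of this proposition at all --- it is quoted as standard Littlewood--Paley background with a pointer to the references \cite{BCD,BL,MWZ,RS,Tri} --- so there is no in-paper argument to compare yours against. Your proposal is precisely the standard proof found in those sources (Plancherel over the dyadic annuli with the bounded-overlap of the $\widehat{\Phi}_j$ for the $p=2$ equivalence; lifting to $s=0$ by $\Lambda^s$ via Bernstein, the square-function characterization of $L^q$ for $1<q<\infty$ as the one nontrivial input, and then Minkowski in $L^{q/2}$, reverse Minkowski for nonnegative functions when $q/2\le 1$, and the pointwise $\ell^2$--$\ell^q$ comparisons), and all of these steps check out.
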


\vskip .1in
For notational convenience, we write $\Delta_j$ for
$\mathring{\Delta}_j$. There will be no confusion if we keep in mind that
$\Delta_j$'s associated with the homogeneous Besov spaces is defined in
(\ref{del1}) while those associated with the inhomogeneous Besov
spaces are defined in (\ref{del2}). Besides the Fourier localization operators $\Delta_j$,
the partial sum $S_j$ is also a useful notation. For an integer $j$,
$$
S_j \equiv \sum_{k=-1}^{j-1} \Delta_k,
$$
where $\Delta_k$ is given by (\ref{del2}). For any $f\in \mathcal{S}'$, the Fourier
transform of $S_j f$ is supported on the ball of radius $2^j$.

\subsection{Bernstein inequalities and Kato-Ponce estimates}
Bernstein's inequalities are useful tools in dealing with Fourier localized functions
and these inequalities trade integrability for derivatives. The following proposition
provides Bernstein type inequalities for fractional derivatives.
\begin{prop}\label{bern}
Let $\alpha\ge0$. Let $1\le p\le q\le \infty$.
\begin{enumerate}
\item[1)] If $f$ satisfies
$$
\mbox{supp}\, \widehat{f} \subset \{\xi\in \mathbb{R}^d: \,\, |\xi|
\le K 2^j \},
$$
for some integer $j$ and a constant $K>0$, then
$$
\|\La^{\al} f\|_{L^q(\mathbb{R}^d)} \le C_1\, 2^{\alpha j +
j d(\frac{1}{p}-\frac{1}{q})} \|f\|_{L^p(\mathbb{R}^d)},
$$
where $C_1$ is a constant depending on $K, \alpha,p$ and $q$
only.
\item[2)] If $f$ satisfies
\begin{equation*}\label{spp}
\mbox{supp}\, \widehat{f} \subset \{\xi\in \mathbb{R}^d: \,\, K_12^j
\le |\xi| \le K_2 2^j \}
\end{equation*}
for some integer $j$ and constants $0<K_1\le K_2$, then
$$
C_1\, 2^{\alpha j} \|f\|_{L^q(\mathbb{R}^d)} \le \|\Lambda^\alpha
f\|_{L^q(\mathbb{R}^d)} \le C_2\, 2^{\alpha j +
j d(\frac{1}{p}-\frac{1}{q})} \|f\|_{L^p(\mathbb{R}^d)},
$$
where $C_2$ is a  constant depending on $K_1, K_2, \alpha,p$ and $q$
only.
\end{enumerate}
\end{prop}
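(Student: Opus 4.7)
The plan is to reduce both parts to the case $j=0$ by dyadic rescaling, represent $\La^\alpha f$ on the relevant frequency band as a convolution with a concretely computable kernel, and then invoke Young's convolution inequality. Fix once and for all radial bumps $\eta,\tilde\eta\in C_c^\infty(\rd)$ with $\eta\equiv 1$ on $\{|\xi|\le 1\}$, $\mathrm{supp}\,\eta\subset\{|\xi|\le 2\}$, $\tilde\eta\equiv 1$ on $\{K_1\le|\xi|\le K_2\}$, and $\mathrm{supp}\,\tilde\eta\subset\{K_1/2\le|\xi|\le 2K_2\}$.

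For part 1), the support condition on $\widehat{f}$ forces $\widehat{f}(\xi)=\eta(\xi/(K2^j))\widehat{f}(\xi)$, hence
\[
\widehat{\La^\alpha f}(\xi)=|\xi|^\alpha\eta(\xi/(K2^j))\,\widehat{f}(\xi)=(K2^j)^{\alpha}\,m\!\left(\tfrac{\xi}{K2^j}\right)\widehat{f}(\xi),\quad m(\zeta):=|\zeta|^\alpha\eta(\zeta).
\]
Inverting the Fourier transform yields $\La^\alpha f=g_j*f$ with $g_j(x)=(K2^j)^{d+\alpha}g_0(K2^j x)$ and $g_0=\check m$. Young's inequality with $1/r=1+1/q-1/p$ then gives $\|\La^\alpha f\|_{L^q}\le\|g_j\|_{L^r}\|f\|_{L^p}$, and a change of variables produces $\|g_j\|_{L^r}=C(K,\alpha)\,2^{j(\alpha+d(1/p-1/q))}\|g_0\|_{L^r}$, which is the claimed bound provided $g_0\in L^r(\rd)$ for every $r\in[1,\infty]$. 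For part 2), the upper inequality is part 1) applied with $K=K_2$. For the reverse inequality, the annular support of $\widehat{f}$ lets me invert $\La^\alpha$ on this band: set $\mu_j(\xi):=|\xi|^{-\alpha}\tilde\eta(\xi/2^j)$, so that $\mu_j(\xi)|\xi|^\alpha\widehat{f}(\xi)=\tilde\eta(\xi/2^j)\widehat{f}(\xi)=\widehat{f}(\xi)$, and therefore $f=h_j*\La^\alpha f$ with $h_j=\check\mu_j$. Scaling gives $h_j(x)=2^{j(d-\alpha)}h_0(2^jx)$, where $h_0$ is the inverse Fourier transform of a $C_c^\infty(\rd)$ function supported away from the origin and hence lies in $\mathcal{S}(\rd)$. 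Young's inequality with $r=1$ then yields $\|f\|_{L^q}\le 2^{-j\alpha}\|h_0\|_{L^1}\|\La^\alpha f\|_{L^q}$, which rearranges to the stated lower bound.

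The genuinely delicate step is the integrability claim $g_0\in L^r(\rd)$ for all $r\ge 1$ in part 1), because $|\xi|^\alpha$ fails to be smooth at the origin when $\alpha$ is not a nonnegative even integer, so the compactly supported multiplier $m(\zeta)=|\zeta|^\alpha\eta(\zeta)$ is only H\"older continuous of order $\alpha$ there. I would handle this by splitting $m=m_1+m_2$ with $m_1$ supported in $\{|\zeta|\le 1/2\}$ and $m_2\in C_c^\infty(\rd)$: then $\check m_2\in\mathcal{S}(\rd)$, while standard integration-by-parts arguments exploiting the homogeneity of $|\zeta|^\alpha$ near the origin produce the decay $|\check m_1(x)|\lesssim (1+|x|)^{-d-\alpha}$, which lies in every $L^r(\rd)$ with $r\ge 1$ when $\alpha>0$. (The case $\alpha=0$ is immediate since $m\in C_c^\infty(\rd)$ already.) This closes the argument.
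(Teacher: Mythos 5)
The paper does not prove this proposition: it is quoted as standard background on Littlewood--Paley theory, with the reader referred to the cited monographs (e.g.\ \cite{BCD,BL,MWZ,RS,Tri}), so there is no in-paper argument to compare against. Your proof is the standard one and is correct: rescale to $j=0$, realize $\La^\alpha$ on the frequency ball (resp.\ annulus) as convolution with a dilated kernel, and apply Young's inequality with $1/r=1+1/q-1/p$; the exponent $\alpha j + jd(\tfrac1p-\tfrac1q)$ then falls out of the $L^r$-norm of the dilated kernel exactly as you compute, and the reverse inequality in part 2) follows from inverting $|\xi|^{-\alpha}$ on the annulus, where the multiplier is $C_c^\infty$ away from the origin and hence has a Schwartz kernel. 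You also correctly isolate the one genuinely delicate point, namely that $m(\zeta)=|\zeta|^\alpha\eta(\zeta)$ is not smooth at $\zeta=0$ for non-even $\alpha$, so $\check m$ is not Schwartz; the decay $|\check m_1(x)|\lesssim (1+|x|)^{-d-\alpha}$ for the piece supported near the origin (obtained, e.g., by dyadic decomposition of the multiplier near $0$ and summing the rescaled Schwartz bounds, which is presumably what you mean by exploiting homogeneity) gives $\check m\in L^r$ for every $r\ge 1$ when $\alpha>0$, which is all that Young's inequality requires. No gaps.
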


\vskip .1in
The following Kato-Ponce type estimate (also known as fractional
Leibnitz rule) will be used extensively,
\begin{equation}
\label{277}
\|\La^s(f g)\|_{L^p}\leq C( \|\La^s f\|_{L^{q_1}} \|g\|_{L^{r_1}}+ \|\La^s g\|_{L^{q_2}} \|f\|_{L^{r_2}}),
\end{equation}
whenever $s>0, 1<p,q_1,q_2<\infty$, $1<r_1, r_2\le \infty$, $\frac{1}{p}=\frac{1}{q_1}+\frac{1}{r_1}=\frac{1}{q_2}+\frac{1}{r_2}$. This inequality can be found in
many references (see  \cite{KPV} and \cite{GO} for recent results and survey of the literature on the topic). As a corollary, we can deduce the following estimate, at least for all integers $m$
\begin{equation}
\label{27}
\|\La^s(f^m)\|_{L^p}\leq C \|\La^s f\|_{L^{q}} \|f\|_{L^{r(m-1)}}^{m-1},
\end{equation}
whenever $1<p,q,r<\infty$ and $\frac{1}{p}=\frac{1}{q}+\frac{1}{r}$.
\subsection{Commutator estimates}
As we have seen already, the equation \eqref{Geqin} involves commutators. Thus, we shall need to develop the corresponding estimates, so that we can bound them suitably.
\begin{lemma}
\label{le:980}
Let $1>\al>1/2$ and $1<p<\infty$, $1<p_1,p_2\leq \infty: \frac{1}{p}=\frac{1}{p_1}+\frac{1}{p_2}$. For any integer $k$, we have for the $u_G$ and $u_\theta$ defined in \eqref{uGuT},
\begin{eqnarray}
\label{80}
\|\Delta_k([\ral, u_G\cdot\nabla]\theta)\|_{L^p}\leq C  2^{(1-\al)k}\|G\|_{L^{p_1}}\|\theta\|_{L^{p_2}};\\
\label{90}
\|\Delta_k([\ral, u_{\theta}\cdot\nabla]\psi)\|_{L^p}\leq C 2^{(2-2\al)k}  \|\theta\|_{L^{p_1}}\|\psi\|_{L^{p_2}}.
\end{eqnarray}
More generally,  for $0\leq s\leq 1-\al$, we have
\begin{eqnarray}
\label{280}
\|\Delta_k([\ral, u_G\cdot\nabla]\theta)\|_{L^p}\leq C_s  2^{(1-\al-s)k}\|G\|_{L^{p_1}}
\|\La^s \theta\|_{L^{p_2}};\\
\label{290}
\|\Delta_k([\ral, u_{\theta}\cdot\nabla]\psi)\|_{L^p}\leq C_s 2^{(2-2\al-s)k}  \|\La^s \theta\|_{L^{p_1}}\|\psi\|_{L^{p_2}}.
\end{eqnarray}

\end{lemma}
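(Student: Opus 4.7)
The plan is to expand the commutators via Bony's paraproduct and to exploit the extra regularity of $u_G$ and $u_\theta$. Since $u_G=\nabla^\perp\De^{-1}G$ and $u_\theta=\nabla^\perp\De^{-1}\ral\theta$, the Fourier multipliers $G\mapsto u_G$ and $\theta\mapsto u_\theta$ are of order $-1$ and $-\al$, respectively, so on frequency shells
\[
\|\Delta_j u_G\|_{L^r}\lesssim 2^{-j}\|\Delta_j G\|_{L^r}, \qquad \|\Delta_j u_\theta\|_{L^r}\lesssim 2^{-\al j}\|\Delta_j\theta\|_{L^r}.
\]
Since $\ral$ commutes with $\pp_\ell$ and both $u_G$ and $u_\theta$ are divergence free, I will first use the identity $[\ral,v\cdot\nabla]\theta=\pp_\ell[\ral,v^\ell]\theta$; this pulls one derivative outside, to be recovered from $\Delta_k\pp_\ell$ via Bernstein at the cost of a factor $2^k$. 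It therefore suffices to bound $\|\Delta_k[\ral,v^\ell]\theta\|_{L^p}$ by $2^{-k}$ times the targeted right-hand side.

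Next I apply Bony's decomposition to both $v^\ell\theta$ and $v^\ell\ral\theta$, which produces the five-term splitting
\[
[\ral,v^\ell]\theta=[\ral,T_{v^\ell}]\theta+\bigl(\ral T_\theta v^\ell-T_{\ral\theta}v^\ell\bigr)+\bigl(\ral R(v^\ell,\theta)-R(v^\ell,\ral\theta)\bigr).
\]
Only the first piece is a genuine commutator; after $\Delta_k$ only indices $j\approx k$ survive, so it reduces to estimating $\|[\ral,S_{k-1}v^\ell]\Delta_k\theta\|_{L^p}$. A first-order Taylor expansion of the symbol $i\xi_1|\xi|^{-\al}$ around the high-frequency argument $\xi-\eta$ (the classical Calder\'on commutator trick) gives
\[
\|[\ral,S_{k-1}v^\ell]\Delta_k\theta\|_{L^p}\lesssim 2^{-\al k}\|\nabla S_{k-1}v\|_{L^{p_1}}\|\Delta_k\theta\|_{L^{p_2}}.
\]
For $v=u_G$ I close with the Riesz bound $\|\nabla u_G\|_{L^{p_1}}\lesssim\|G\|_{L^{p_1}}$; for $v=u_\theta$ the multiplier $\nabla u_\theta$ has symbol of order $1-\al$, so Bernstein on the frequency truncation gives $\|\nabla S_{k-1}u_\theta\|_{L^{p_1}}\lesssim 2^{(1-\al)k}\|\theta\|_{L^{p_1}}$. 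Multiplying by the outer $2^k$ from $\pp_\ell$ reproduces the powers $2^{(1-\al)k}$ and $2^{(2-2\al)k}$ claimed in \eqref{80} and \eqref{90}.

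The middle paraproduct pieces $\ral T_\theta v^\ell-T_{\ral\theta}v^\ell$ are handled summand by summand: again only $j\approx k$ contributes, Bernstein extracts order $1-\al$ from $\ral$, H\"older pairs $\|S_{j-1}\theta\|_{L^{p_2}}$ or $\|S_{j-1}\ral\theta\|_{L^{p_2}}$ with $\|\Delta_j v\|_{L^{p_1}}$, and the smoothing of $u_G$ (resp.\ $u_\theta$) supplies the remaining gain. The high--high remainders $\ral R(v^\ell,\theta)$ and $R(v^\ell,\ral\theta)$ are summed over $j\ge k-N$; the decay $2^{-j}$ (resp.\ $2^{-\al j}$) coming from $v$ beats the loss $2^{(1-\al)j}$ from $\ral\theta$ (since $\al>1/2$ is more than enough), so the geometric series converges and produces a bound at least as sharp as the target. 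The generalized estimates \eqref{280} and \eqref{290} follow from the same splitting by systematically replacing $\|\Delta_j\theta\|_{L^{p_2}}$ with $2^{-sj}\|\La^s\theta\|_{L^{p_2}}$ via Bernstein; the constraint $0\le s\le 1-\al$ is exactly what keeps the geometric sums in the remainder pieces convergent with the shifted exponents. I expect the main obstacle to be purely organizational: three distinct frequency patterns ($T_{v^\ell}$, $T_\theta v^\ell$, $R(v^\ell,\theta)$) combined with two different smoothing indices ($-1$ for $u_G$, $-\al$ for $u_\theta$) must be balanced uniformly against the gain from $\ral$ to land exactly on $2^{(1-\al-s)k}$ and $2^{(2-2\al-s)k}$; once the splitting is set up, each individual term is a short Bernstein-plus-H\"older computation.
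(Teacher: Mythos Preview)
Your proposal is correct and follows essentially the same strategy as the paper. Both arguments exploit the divergence-free condition to write $[\ral,v\cdot\nabla]\theta=\nabla\cdot[\ral,v]\theta$, split via a paraproduct into low--high, high--low, and high--high interactions, treat the last two by crude H\"older/Bernstein bounds with the smoothing $\|\Delta_j u_G\|\lesssim 2^{-j}\|G\|$ and $\|\Delta_j u_\theta\|\lesssim 2^{-\alpha j}\|\theta\|$, and reserve the commutator structure for the low--high piece. The only cosmetic difference is in that last step: the paper peels off $\partial_1$ via the product rule, reducing to $[\Lambda^{-\alpha},f_{<k}]g_{\sim k}$ and then to the elementary kernel estimate $\|[P_k,f]g\|_{L^p}\lesssim 2^{-k}\|\nabla f\|_{L^{p_1}}\|g\|_{L^{p_2}}$, whereas you run the Calder\'on/Taylor-remainder argument directly on the full symbol $i\xi_1|\xi|^{-\alpha}$; both routes land on the same bound $2^{-\alpha k}\|\nabla S_{k-1}v\|_{L^{p_1}}\|\Delta_k\theta\|_{L^{p_2}}$.
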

\begin{proof}
Most of the proof will be concerned with \eqref{80}, \eqref{90}. At the end, we will indicate the  (small) modifications needed for \eqref{280} and \eqref{290}.

In order to simplify notations, we denote $u_j:=\Delta_j u$, $u_{<k}:=S_k u $, $u_{[k-A, k+B]}:=
\sum_{j=k-A}^{k+B} u_j$. If $A, B<10$, we use $u_{\sim k}$ to denote $u_{[k-A, k+B]}$, etc.
We use the following paraproduct decomposition for the product of two functions
\begin{eqnarray*}
\Delta_k(f g)=   \Delta_k( f_{<k-10}  g_{\sim k})+ \Delta_k(f_{\sim k} g_{<k+10})+\Delta_k(\sum_{l=k+10}^\infty f_l g_{\sim l}).
\end{eqnarray*}
We refer to the first term as low-high interaction, the second term is high-low interaction and the third term is high-high interaction.  Only the low-high interaction term is not straightforward and requires the commutator structure.   Note that according to the definition \eqref{uGuT}, we have that $u_G\sim \La^{-1} G$, $u_{\theta}\sim \La^{-\al} \theta$.\\
 More precisely,  for any $1\leq p\leq \infty$,
\begin{eqnarray*}
\|\Delta_l(u_G)\|_{L^p} &\sim & 2^{-l} \|\Delta_l G\|_{L^p}\leq C 2^{-l} \|G\|_{L^p}; \\
\|\Delta_l(u_\theta)\|_{L^p} & \sim &  2^{-\al l} \|\Delta_l \theta\|_{L^p}\leq C 2^{-\al l} \|\theta\|_{L^p}.
\end{eqnarray*}
 \\
{\bf High-low interactions} \\
For \eqref{80}, we have by H\"older's inequality,
\begin{eqnarray*}
& & \|\Delta_k([\ral, (u_G)_{\sim k}\cdot\nabla ]\theta_{<k+10})\|_{L^p}  \leq C
 \| \ral  \Delta_k [(u_G)_{\sim k}\cdot\nabla \theta_{<k+10}]\|_{L^p}  \\
&+& \|[(u_G)_{\sim k}\cdot\nabla ]\ral \theta_{<k+10}\|_{L^p})\leq C 2^{k(1-\al)} \|(u_G)_{\sim k}\|_{L^{p_1}}
\|\nabla \theta_{<k+10}\|_{L^{p_2}} \\
&+& C   \|(u_G)_{\sim k}\|_{L^{p_1}} \|\La^{2-\al}\theta _{<k+10}\|_{L^{p_2}}.
\end{eqnarray*}
But $\|(u_G)_{\sim k}\|_{L^{p_1}} \leq C 2^{-k} \|G\|_{L^{p_1}}$, and
\begin{equation}
\label{110}
\|\nabla \theta_{<k+10}\|_{L^{p_2}}\leq C 2^k \|\theta\|_{L^{p_2}}, \ \
\|\La^{2-\al} \theta_{<k+10}\|_{L^{p_2}}\leq C 2^{(2-\al)k} \|\theta\|_{L^{p_2}}.
\end{equation}
Putting everything together yields the desired inequality
$$
\|\|\Delta_k([\ral, (u_G)_{\sim k}\cdot\nabla ]\theta_{<k+10})\|_{L^p}  \leq C 2^{k(1-\al)} \|G\|_{L^{p_1}}\|\theta\|_{L^{p_2}}.
$$
Similarly, for \eqref{90}, we have by H\"older's inequality,
\begin{eqnarray*}
& & \| \Delta_k([\ral, (u_{\theta})_{\sim k}\cdot\nabla ]\psi_{<k+10})\|_{L^p}    \leq C
 \| \ral [(u_\theta)_{\sim k}\cdot\nabla \psi_{<k+10}]\|_{L^p}  \\
&+& \|[(u_\theta)_{\sim k}\cdot\nabla ]\ral \psi_{<k+10}\|_{L^p})\leq C 2^{k(1-\al)} \|(u_\theta)_{\sim k} \|_{L^{p_1}}
\|\nabla \psi_{<k+10}\|_{L^{p_2}} \\
&+& C     \|(u_\theta)_{\sim k}\|_{L^{p_1}} \|\La^{2-\al} \psi_{<k+10}\|_{L^{p_2}}.
\end{eqnarray*}
Again, $ \|(u_\theta)_{\sim k} \|_{L^{p_1}} \leq C 2^{-k\al} \|\theta\|_{L^{p_1}}$,  in  conjunction with \eqref{110}, yields
$$
\|\Delta_k([\ral, (u_{\theta})_{\sim k}\cdot\nabla ]\psi_{<k+10})\|_{L^p}    \leq C 2^{2(1-\al)k} \|\theta\|_{L^{p_1}}
\|\psi\|_{L^{p_2}}.
$$
{\bf High-high interactions}\\
For \eqref{80}, we have   that $u\cdot \nabla \theta=\nabla\cdot [u\, \theta]$ (since $\nabla\cdot u=0$). Thus, we have by H\"older's inequality,
\begin{eqnarray*}
& & \|\sum_{l=k+10}^\infty \Delta_k(  [\ral, (u_G)_{l}\cdot\nabla ]\theta_{\sim l})\|_{L^p}  \leq C \sum_{l=k+10}^\infty
 \| \ral \nabla \Delta_k  [(u_G)_{l}\cdot  \theta_{\sim l}]\|_{L^p}  \\
&+& C \sum_{l=k+10}^\infty \|\nabla \Delta_k [(u_G)_{l}\cdot  ]\ral \theta_{\sim l}\|_{L^p})\leq C 2^{k(2-\al)}  \sum_{l=k+10}^\infty \|(u_G)_{l}\|_{L^{p_1}}
\| \theta_{\sim l}\|_{L^{p_2}} \\
&+& C 2^k \sum_{l=k+10}^\infty  \|(u_G)_{l}\|_{L^{p_1}} \|\La^{1-\al}\theta _{\sim l}\|_{L^{p_2}} \\
&\leq &
C\|G\|_{L^{p_1}}\|\theta\|_{L^{p_2}}  \sum_{l=k+10}^\infty (2^{k(2-\al)} 2^{-l}  + 2^k 2^{-\al l})\leq C 2^{k(1-\al)}
\|G\|_{L^{p_1}}\|\theta\|_{L^{p_2}}.
\end{eqnarray*}
For \eqref{90}, we proceed in the same way, but note that  towards the end, we need that $\al>1/2$.  We have
\begin{eqnarray*}
& & \|\sum_{l=k+10}^\infty \Delta_k(  [\ral, (u_\theta)_{l}\cdot\nabla ]\psi_{\sim l})\|_{L^p}  \leq C \sum_{l=k+10}^\infty
 \| \ral \nabla \Delta_k  [(u_\theta)_{l}\cdot  \psi_{\sim l}]\|_{L^p}  \\
&+& C \sum_{l=k+10}^\infty \|\nabla \Delta_k [(u_\theta)_{l}\cdot  ]\ral \psi_{\sim l}\|_{L^p})\leq C 2^{k(2-\al)}  \sum_{l=k+10}^\infty \|(u_\theta)_{l}\|_{L^{p_1}}
\| \psi_{\sim l}\|_{L^{p_2}} \\
&+& C 2^k \sum_{l=k+10}^\infty  \|(u_\theta)_{l}\|_{L^{p_1}} \|\La^{1-\al}\psi _{\sim l}\|_{L^{p_2}}  \\
&\leq &
C\|G\|_{L^{p_1}}\|\theta\|_{L^{p_2}}  \sum_{l=k+10}^\infty (2^{k(2-\al)} 2^{-\al l}  + 2^k 2^{-l(2\al-1)})\leq C 2^{2k(1-\al)}
\|G\|_{L^{p_1}}\|\theta\|_{L^{p_2}}.
\end{eqnarray*}
{\bf Low high interactions}\\
Now we need to estimate
$\|  \Delta_k(  [\ral, (u_G)_{<k-10}\cdot\nabla ]\theta_{\sim k})\|_{L^p}$. As before, we use the divergence free condition to reduce to
\begin{eqnarray*}
\|  \Delta_k(  [\ral, (u_G)_{<k-10}\cdot\nabla ]\theta_{\sim k})\|_{L^p} &=&  \|  \nabla \Delta_k(  [\ral, (u_G)_{<k-10}\cdot]\theta_{\sim k})\|_{L^p}\\
&\leq & C 2^k \| \Delta_k(  [\ral, (u_G)_{<k-10}\cdot]\theta_{\sim k})\|_{L^p},
\end{eqnarray*}
so that now, we need to check
\begin{equation}
\label{120}
\| \Delta_k(  [\ral, (u_G)_{<k-10}\cdot]\theta_{\sim k})\|_{L^p}\leq C 2^{-k\al} \|G\|_{L^{p_1}} \|\theta\|_{L^{p_2}}.
\end{equation}
In addition, $\ral=\partial_1 \La^{-\al}$, so we use the (standard)  product rule to conclude
 \begin{eqnarray*}
 & & [\ral, (u_G)_{<k-10}\cdot]\theta_{\sim k}) = \ral[(u_G)_{<k-10}\cdot \theta_{\sim k}]-(u_G)_{<k-10}\cdot\ral \theta_{\sim k}  \\
&=& \La^{-\al}[\partial_1 (u_G)_{<k-10}\cdot\theta_{\sim k}]+ [\La^{-\al},(u_G)_{<k-10}\cdot] \partial_1\theta_{\sim k}.
\end{eqnarray*}
Clearly, the first term satisfies the required estimates, since
$$
\|\Delta_k \La^{-\al}[\partial_1 (u_G)_{<k-10}\cdot\theta_{\sim k}]\|_{L^p} \leq 2^{-k\al}
\|\partial_1 (u_G)_{<k-10}\|_{L^{p_1}} \|\theta_{\sim k}\|_{L^{p_2}}\leq C 2^{-k\al} \|G\|_{L^{p_1}} \|\theta\|_{L^{p_2}},
$$
which is \eqref{120}. It then remains to show
\begin{equation}
\label{130}
\|\Delta_k [\La^{-\al},f_{<k-10}] g_{\sim k}\|_{L^p}\leq C  2^{-k(1+ \al)} \|\nabla f\|_{L^{p_1}} \|g\|_{L^{p_2}}.
\end{equation}
Indeed, if we show that, we apply it to $f=u_G, g=\partial_1 \theta$ and we obtain the result.

Write
\begin{eqnarray*}
 & & \Delta_k [\La^{-\al},f_{<k-10}] g_{\sim k}=\Delta_k \La^{-\al}[f_{<k-10} g_{\sim k}] -
\Delta_k[f_{<k-10} \La^{-\al} g_{\sim k}]
\end{eqnarray*}
Denote the multiplier of $g_{\sim k}$ by $\tilde{\Delta}_k$. Note that by the support properties of the corresponding multipliers, we have that $\tilde{\Delta}_k \Delta_k=\Delta_k$. Thus,
\begin{eqnarray*}
\Delta_k [\La^{-\al},f_{<k-10}] g_{\sim k} &=& \Delta_k \tilde{\Delta}_k \La^{-\al}[f_{<k-10} g_{\sim k}] -
\Delta_k[f_{<k-10} \La^{-\al} g_{\sim k}]\\
&=& \Delta_k([\tilde{\Delta}_k \La^{-\al}, f_{<k-10}] g_{\sim k}).
\end{eqnarray*}
Thus, it suffices to estimate $\|[\tilde{\Delta}_k \La^{-\al}, f_{<k-10}] g_{\sim k}\|_{L^p}$. Furthermore, we have that
$$
\tilde{\Delta}_k \La^{-\al}=2^{-k\al} P_k,
$$
where $\widehat{P_k f}(\xi)=\tilde{\chi}(2^{-k} \xi) \hat{f}(\xi)$, where $\tilde{\chi}$ is a $C^\infty$ function, supported in $\{\xi: |\xi|\in (1/2, 2)\}$. Thus, we need to show
\begin{equation}
\label{140}
\|[P_k,f] g\|_{L^p}\leq C 2^{-k} \|\nabla f\|_{L^{p_1}} \|g\|_{L^{p_2}}.
\end{equation}
But this is a standard result in harmonic analysis. Here is the easy proof  for completeness
\begin{eqnarray*}
[P_k,f] g(x) &=& 2^{k d} \int_{\rd} \hat{\chi}(2^k(x-y))(f(y)-f(x)) g(y) dy \\
&=& 2^{k d}\int_{\rd} \hat{\chi}(2^k(x-y))  g(y) (\int_0^1 \langle y-x, \nabla f(x-\rho(x-y))\rangle d\rho) dy
\end{eqnarray*}
It follows that
\begin{eqnarray*}
|[P_k,f]g(x)|\leq \int_0^1 \int_{\rd}     | \nabla f(x-\rho z )|   |g(x-z)| 2^{k d} |z| |\hat{\chi}(2^k z))| dz d\rho
\end{eqnarray*}
By H\"older's
$$
\| [P_k,f]g\|_{L^p}\leq C\|\nabla f\|_{L^{p_1}} \|g\|_{L^{p_2}} \int_{\rd} 2^{k d} |z| |\hat{\chi}(2^k z))| dz =
C 2^{-k} \|\nabla f\|_{L^{p_1}} \|g\|_{L^{p_2}}.
$$
This finishes the proof of \eqref{140} and hence of \eqref{80}.

For the low-high interaction term of \eqref{90}, we reduce similarly. More precisely, by the divergence free condition
\begin{eqnarray*}
\|  \Delta_k(  [\ral, (u_\theta)_{<k-10}\cdot\nabla ]\psi_{\sim k})\|_{L^p} &=&  \|  \nabla \Delta_k(  [\ral, (u_\theta)_{<k-10}\cdot]\psi_{\sim k})\|_{L^p} \\
&\leq & C 2^k \| \Delta_k(  [\ral, (u_\theta)_{<k-10}\cdot]\psi_{\sim k})\|_{L^p},
\end{eqnarray*}
Next,
\begin{eqnarray*}
 & & [\ral, (u_\theta)_{<k-10}\cdot]\psi_{\sim k}) = \ral[(u_\theta)_{<k-10}\cdot \psi_{\sim k}]-(u_\theta)_{<k-10}\cdot\ral \psi_{\sim k} \\
&=& \La^{-\al}[\partial_1 (u_\theta)_{<k-10}\cdot\psi_{\sim k}]+ [\La^{-\al},(u_\theta)_{<k-10}\cdot] \partial_1\psi_{\sim k}.
\end{eqnarray*}
For the first term, since $\|\partial_1 (u_\theta)_{<k-10}\|_{L^{p_1}}\leq C 2^{k(1-\al)} \|\theta\|_{L^{p_1}}$,
$$
\|\Delta_k \La^{-\al}[\partial_1 (u_\theta)_{<k-10}\cdot\psi_{\sim k}]\|_{L^p} \leq 2^{-k\al}
\|\partial_1 (u_\theta)_{<k-10}\|_{L^{p_1}} \|\psi_{\sim k}\|_{L^{p_2}}\leq C 2^{k(1-2\al)}
\|\theta\|_{L^{p_1}} \|\psi\|_{L^{p_2}}.
$$
For the second, we can reduce, in a similar way, to proving an  estimate in the form
$$
\|[\tilde{\Delta}_k \La^{-\al}, (u_\theta)_{<k-10}] \psi_{\sim k}\|_{L^p}\leq 2^{-2\al k}
\|\theta\|_{L^{p_1}} \|\psi\|_{L^{p_2}}.
$$
Recalling $\tilde{\Delta}_k \La^{-\al}= 2^{-k\al} P_k $, we have by \eqref{140} and
$\|\nabla (u_\theta)_{<k-10}\|_{L^{p_1}}\leq C 2^{k(1-\al)} \|\theta\|_{L^{p_1}}$,
\begin{eqnarray*}
 & &\|[\tilde{\Delta}_k \La^{-\al}, (u_\theta)_{<k-10}] \psi_{\sim k}\|_{L^p} = 2^{-k\al} \|P_k,  (u_\theta)_{<k-10}] \psi_{\sim k}\|_{L^p} \\
&\leq & C 2^{-k(1+\al)} \|\nabla (u_\theta)_{<k-10}\|_{L^{p_1}} \|\psi\|_{L^{p_2}}\leq C 2^{-2 k\al}
\|\theta\|_{L^{p_1}}  \|\psi\|_{L^{p_2}}.
\end{eqnarray*}
Regarding the proofs of \eqref{280} and \eqref{290}, one just needs to go back to the arguments presented above and trace the derivatives. More precisely, for  \eqref{280}, things are clear in the high-high and the low high interaction cases, since we use \eqref{80} and the inequality
$$
2^{k(1-\al)} \|\theta_{\geq k-10}\|_{L^{p_2}}\leq C 2^{k(1-\al-s)}\|\La^s \theta\|_{L^{p_2}}.
$$
In the high-low interaction case, note that we have used $\|\nabla \theta_{<k+10}\|_{L^{p_2}}\leq 2^k\|\theta\|_{L^{p_2}}$. If we have  instead used
$$
\|\nabla \theta_{<k+10}\|_{L^{p_2}}\leq 2^{k(1-s)}\|\La^s \theta\|_{L^{p_2}},
$$
we would have obtained \eqref{280}, instead of \eqref{80}. The arguments for \eqref{290} are of similar nature and we omit them.
\end{proof}
We now can deduce the following
\begin{corollary}
\label{cor1}
Let $1>\alpha>1/2$ and $1< p_2<\infty, 1<p_1, p_3\leq \infty$, so that \\ $ \f{1}{p_1}+\f{1}{p_2}+\f{1}{p_3}=1$.
For every $s_1:0\leq s_1< 1-\al$ and $s_2: s_2> 1-\alpha-s_1$,  there exists a
$C=C(p_1, p_2, p_3, s_1, s_2)$, so that
\begin{equation}
\label{300}
|\int_{\rd}  F [\ral, u_G\cdot\nabla]\theta dx|\leq C \|\La^{s_1} \theta\|_{L^{p_1}} \|F\|_{W^{s_2,p_2}} \|G\|_{L^{p_3}}
\end{equation}
Similarly, for every $s_1:0\leq s_1< 1-\al$ and $s_2: s_2> 2-2\alpha-s_1$, we have
\begin{equation}
\label{310}
|\int_{\rd}  F [\ral, u_\theta\cdot\nabla]\psi dx|\leq C \|\La^{s_1} \theta\|_{L^{p_1}} \|F\|_{W^{s_2,p_2}}
\|\psi\|_{L^{p_3}}
\end{equation}
\end{corollary}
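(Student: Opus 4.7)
The plan is to reduce \eqref{300} and \eqref{310} to the single-block commutator bounds \eqref{280} and \eqref{290} from Lemma \ref{le:980} via a Littlewood-Paley decomposition of the $F$-pairing, then use the condition on $s_2$ purely as a summability criterion in the dyadic parameter $k$. Write $g := [\ral, u_G\cdot\nabla]\theta$ and decompose
$$
\int_{\rd} F\, g \,dx = \sum_{k\ge -1} \int_{\rd} F\,\Delta_k g\,dx = \sum_{k\ge -1} \int_{\rd} (\tilde\Delta_k F)\,(\Delta_k g)\,dx,
$$
where $\tilde\Delta_k$ is a fattened Littlewood-Paley projector that equals the identity on the Fourier support of $\Delta_k$. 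The second equality is the usual almost-orthogonality and costs nothing.

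Next, Hölder's inequality in the pair $(p_2,p_2')$ gives
$$
|\int_{\rd} F\, g\, dx| \le \sum_{k\ge -1} \|\tilde\Delta_k F\|_{L^{p_2}}\,\|\Delta_k g\|_{L^{p_2'}}.
$$
Since $\tfrac{1}{p_1}+\tfrac{1}{p_2}+\tfrac{1}{p_3}=1$, we have $\tfrac{1}{p_2'}=\tfrac{1}{p_1}+\tfrac{1}{p_3}$, so the lemma applies with $(p,p_1^{\text{lem}},p_2^{\text{lem}})=(p_2',p_3,p_1)$ and exponent $s=s_1\in[0,1-\al)$; estimate \eqref{280} yields
$$
\|\Delta_k g\|_{L^{p_2'}}\le C\,2^{(1-\al-s_1)k}\,\|G\|_{L^{p_3}}\,\|\La^{s_1}\theta\|_{L^{p_1}}.
$$
For the $F$-factor, the standard Littlewood-Paley characterization of $W^{s_2,p_2}$ gives $\|\tilde\Delta_k F\|_{L^{p_2}}\le C\,2^{-s_2 k}\,\|F\|_{W^{s_2,p_2}}$ for $k\ge 0$, while the low-frequency block $k=-1$ is harmlessly controlled by $\|F\|_{L^{p_2}}\le\|F\|_{W^{s_2,p_2}}$.

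Combining these bounds and summing,
$$
|\int_{\rd} F\,g\,dx| \le C\,\|F\|_{W^{s_2,p_2}}\|G\|_{L^{p_3}}\|\La^{s_1}\theta\|_{L^{p_1}}\sum_{k\ge 0} 2^{(1-\al-s_1-s_2)k},
$$
which converges precisely when $s_2>1-\al-s_1$, proving \eqref{300}. The argument for \eqref{310} is verbatim the same, except that one uses \eqref{290} in place of \eqref{280}; this replaces the dyadic factor $2^{(1-\al-s_1)k}$ by $2^{(2-2\al-s_1)k}$, so summability now demands $s_2>2-2\al-s_1$, exactly as stated. There is no genuine obstacle here: the only care needed is the bookkeeping of the three Hölder exponents and ensuring that the $s_1$ derivative budget used in Lemma \ref{le:980} and the $s_2$ budget supplied by $F\in W^{s_2,p_2}$ together exceed the scaling deficits $1-\al$ and $2-2\al$ produced by the commutators involving $u_G$ and $u_\theta$, respectively.
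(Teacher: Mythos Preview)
Your proof is correct and follows essentially the same route as the paper: both arguments decompose the pairing via Littlewood--Paley, invoke the single-block commutator bounds \eqref{280}--\eqref{290} with the H\"older triple $(p_2',p_3,p_1)$, use the Besov-type control $\|\tilde\Delta_k F\|_{L^{p_2}}\lesssim 2^{-s_2 k}\|F\|_{W^{s_2,p_2}}$ (equivalently $W^{s_2,p_2}\hookrightarrow B^{s_2}_{p_2,\infty}$ for $1<p_2<\infty$), and then sum the resulting geometric series, which converges exactly under the stated threshold on $s_2$. The only cosmetic difference is that the paper phrases the last step as a bound by $\max(\|F_{<10}\|_{L^{p_2}},\sup_{j\ge 0}2^{js_2}\|\tilde\Delta_jF\|_{L^{p_2}})$ rather than writing out the geometric sum explicitly.
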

\begin{proof}
 Recall that we have $\sum_j \mathring{\Delta}_j=Id$. Take $\tilde{\Delta}_j$ with similar properties, so that $\tilde{\Delta}_j \mathring{\Delta}_j=\mathring{\Delta}_j$.  For \eqref{300},  we have by \eqref{280},
\begin{eqnarray*}
& & |\int_{\rd}  F [\ral, u_\theta\cdot\nabla]\psi dx|\leq \sum_j  \int |\tilde{\Delta}_j F|
|\mathring{\Delta}_j[\ral, u_G\cdot\nabla]\theta] |dx\leq \\
&\leq & C \sum_j 2^{j(1-\al-s_1)}
\|\tilde{\Delta}_j F\|_{L^{p_2}} \|G\|_{L^{p_3}} \|\La^{s_1}\theta\|_{L^{p_1}}\leq \\
&\leq &
C  \|G\|_{L^{p_3}} \|\La^{s_1}\theta\|_{L^{p_1}} \max(\|F_{<10}\|_{L^{p_2}}, \sup_{j\geq 0}  2^{js_2}
\|\tilde{\Delta}_j F\|_{L^{p_2}}),
\end{eqnarray*}
which of course implies \eqref{300}.  The proof of \eqref{310} is similar.
\end{proof}

\subsection{$L^2$ bound for $G$}
We  present the global $L^2$-bound for $G$, which improves the
corresponding $L^2$-bound in Theorem 5.1 of \cite{JMWZ} by relaxing the
condition from $\alpha>\frac45$ to $\alpha>\frac34$.

\begin{lemma}
\label{le:106}
Let $ \al>\frac{3}{4}$ and $(u, \theta)$ be the solution of \eqref{BQE} in some interval $[0,T]$. Then,
 $G$ defined in \eqref{Geqin} satisfies for every $0\leq t\leq T$,
$$
\|G(t)\|_{L^2}^2 + \int_0^t \|\La^{\frac{\al}{2}} G(\tau)\|_{L^2}^2 d\tau\leq C(T, u_0, \theta_0).
$$
\end{lemma}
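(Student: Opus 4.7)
The plan is a standard $L^2$ energy estimate for the equation \eqref{Geqin}. Testing against $G$ and using $\nabla\cdot u=0$ to eliminate the transport term, one obtains
$$
\frac{1}{2}\frac{d}{dt}\|G\|_{L^2}^2 + \|\Lambda^{\alpha/2}G\|_{L^2}^2 = \int_{\mathbf{R}^2}\!\! G\,[\mathcal{R}_\alpha, u\cdot\nabla]\theta\,dx + \int_{\mathbf{R}^2}\!\! G\,\Lambda^{\beta-\alpha}\partial_1\theta\,dx.
$$
The strategy is to bound the right-hand side by $\tfrac{1}{2}\|\Lambda^{\alpha/2}G\|_{L^2}^2 + A(t)\|G\|_{L^2}^2 + B(t)$ with $A,B\in L^1_{\mathrm{loc}}([0,T])$ and then close by Gronwall. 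The $L^1_t$ control on $A,B$ will ultimately come from the a priori bounds $\theta\in L^\infty_t L^q_x$ for every $q$ and $\Lambda^{\beta/2}\theta\in L^2_tL^2_x$ from \eqref{L2theta}.

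For the linear source, since $\beta-\alpha=1-2\alpha$, Plancherel gives
$$
\Bigl|\int G\,\Lambda^{1-2\alpha}\partial_1\theta\,dx\Bigr|\leq \|\Lambda^{\alpha/2}G\|_{L^2}\,\|\Lambda^{2-5\alpha/2}\theta\|_{L^2}.
$$
The key numerology is that $2-\tfrac{5\alpha}{2}<\tfrac{1-\alpha}{2}=\tfrac{\beta}{2}$ precisely when $\alpha>\tfrac{3}{4}$, so interpolating $\|\Lambda^{2-5\alpha/2}\theta\|_{L^2}$ between $\|\theta\|_{L^2}$ and $\|\Lambda^{\beta/2}\theta\|_{L^2}$ (when the exponent is non-negative, and dropping the derivative trivially when it is negative) produces a factor that is $L^1_t$ after squaring. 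Young's inequality then absorbs a small multiple of $\|\Lambda^{\alpha/2}G\|_{L^2}^2$.

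For the commutator, the plan is to split $u=u_G+u_\theta$ and invoke Corollary \ref{cor1}. For $[\mathcal{R}_\alpha,u_G\cdot\nabla]\theta$, pair with $G$ and apply \eqref{300} with $s_2=\alpha/2$, $p_2=2$, $s_1=0$, $p_1=\infty$, $p_3=2$; the constraint $s_2>1-\alpha-s_1$ reduces to $\alpha>\tfrac{2}{3}$, which is safely satisfied, and one gets $C\|\theta\|_{L^\infty}\|\Lambda^{\alpha/2}G\|_{L^2}\|G\|_{L^2}$, absorbable via Young's. For $[\mathcal{R}_\alpha,u_\theta\cdot\nabla]\theta$, \eqref{310} with $s_2=\alpha/2$, $p_2=2$, $p_3=\infty$, $p_1=2$ requires $s_1>2-\tfrac{5\alpha}{2}$, and to control $\|\Lambda^{s_1}\theta\|_{L^2}$ from the available $\theta$-bounds one also needs $s_1\leq \beta/2$. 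The admissible window $s_1\in(\max\{0,2-\tfrac{5\alpha}{2}\},\tfrac{1-\alpha}{2}]$ is non-empty iff $\alpha>\tfrac{3}{4}$, matching the hypothesis. Picking such an $s_1$ and interpolating gives a bound of the form $C\|\theta\|_{L^\infty}\|\Lambda^{\beta/2}\theta\|_{L^2}^{\eta}\|\theta\|_{L^2}^{1-\eta}\|\Lambda^{\alpha/2}G\|_{L^2}$, and Young's again absorbs the $\Lambda^{\alpha/2}G$ factor while leaving an $L^1_t$ remainder.

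The main obstacle — and the source of the $\alpha>\tfrac{3}{4}$ threshold — is the $u_\theta$ commutator: the Corollary forces $s_1>2-\tfrac{5\alpha}{2}$, while the only $\theta$-information usable for $\|\Lambda^{s_1}\theta\|_{L^2}$ in $L^2_t$ is interpolation up to $s_1\leq\beta/2=\tfrac{1-\alpha}{2}$, and these two windows meet exactly at $\alpha=\tfrac{3}{4}$. Once this selection is made, one collects the bounds into
$$
\frac{d}{dt}\|G\|_{L^2}^2 + \|\Lambda^{\alpha/2}G\|_{L^2}^2 \leq A(t)\|G\|_{L^2}^2 + B(t),
$$
with $A,B\in L^1([0,T])$, and Gronwall yields the stated estimate.
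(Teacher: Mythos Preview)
Your proposal is correct and follows essentially the same route as the paper: the same $L^2$ energy identity, the same treatment of $J_1$ via $\|\Lambda^{2-5\alpha/2}\theta\|_{L^2}$ and the observation $2-\tfrac{5\alpha}{2}<\tfrac{\beta}{2}\iff \alpha>\tfrac34$, the same splitting $u=u_G+u_\theta$ in the commutator, and the same use of Corollary~\ref{cor1} (with $s_1=0$, $p_1=\infty$ for the $u_G$ piece and $s_1$ up to $\beta/2$, $p_3=\infty$ for the $u_\theta$ piece) before closing by Gronwall. The only cosmetic difference is that the paper takes $s_1=\beta/2$ directly in the $u_\theta$ estimate rather than interpolating, and works with the inhomogeneous norm $\|G\|_{H^{\alpha/2}}$ from the corollary (your $\|\Lambda^{\alpha/2}G\|_{L^2}$ should strictly be $\|G\|_{H^{\alpha/2}}$, but this is harmless since the extra $\|G\|_{L^2}$ term is absorbed into $A(t)\|G\|_{L^2}^2$).
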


\begin{proof} The proof has some similarities to that for Theorem 5.1
in \cite{JMWZ}, but we make use of the global bound (see \eqref{L2theta})
$$
\int_0^T \|\Lambda^{\frac{\beta}{2}} \theta(\tau)\|_{L^2}^2\,d\tau < C(T, \theta_0).
$$
Taking the inner product of (\ref{Geqin}) with $G$, we obtain, after integration by parts,
\begin{equation}\label{g2root}
\frac12 \frac{d}{dt} \|G\|_{L^2}^2  + \|\Lambda^{\frac{\alpha}{2}} G\|_{L^2}^2  = J_1 + J_2,
\end{equation}
where
\begin{eqnarray*}
J_1 =\int G\,\Lambda^{1-2\alpha} \pp_1 \theta\,dx,\qquad J_2 =\int G\, [\mathcal{R}_\alpha, u\cdot\nabla] \theta \,dx.
\end{eqnarray*}
Applying H\"{o}lder's inequality and noting that the Riesz transform $\Lambda^{-1}\partial_1$ is bounded in $L^q$ for any $1<q<\infty$, we obtain, due to $\frac34<\alpha<1$,
$$
|J_1| \le \|\Lambda^{2-\frac52\alpha} \theta\|_{L^2}\, \|\Lambda^{\frac\alpha2} G\|_{L^2} \le \frac14 \, \|\Lambda^{\frac{\alpha}{2}} G\|_{L^2}^2  \, + C\,\|\theta\|^2_{H^{\frac{\beta}2}}.
$$
To bound $J_2$, we write $u=u_G + u_\theta$ as in (\ref{uGuT}). Again, due to
$\frac34<\alpha<1$, we can choose $1-\al <s<\al/2$ and then apply Corollary \ref{cor1} to obtain
\begin{eqnarray*}
\left|\int G\, [\mathcal{R}_\alpha, u_G\cdot\nabla] \theta \,dx \right|
&\le& C\, \|\theta\|_{L^\infty} \, \|G\|_{L^2}\, \|G\|_{H^s} \le C\, \|\theta_0\|_{L^\infty}\, \|G\|_{L^2} \, \|G\|_{H^{\frac{\al}{2}}}\\
&\le& \frac14 \, \|\Lambda^{\frac{\alpha}{2}} G\|_{L^2}^2  \, + C\, \|G\|_{L^2}^2.
\end{eqnarray*}
Applying Corollary \ref{cor1} with $s_1=\frac{(1-\alpha)}{2}$ and $\frac32(1-\al)<s_2<\frac\al2$, we have
\begin{eqnarray*}
\left|\int G\, [\mathcal{R}_\alpha, u_\theta\cdot\nabla] \theta \,dx \right|
&\le& C\, \|\Lambda^{s_1}\theta\|_{L^2}\,\|\theta\|_{L^\infty}\, \|G\|_{H^{s_2}} \le C\, \|\theta_0\|_{L^\infty}\, \|\Lambda^{\frac{\beta}{2}}\theta\|_{L^2} \, \|G\|_{H^{\frac{\al}{2}}}\\
&\le& \frac14 \, \|\Lambda^{\frac{\alpha}{2}} G\|_{L^2}^2  \, + C\,\|\Lambda^{\frac{\beta}{2}}\theta\|_{L^2}^2.
\end{eqnarray*}
Therefore,
$$
|J_2| \le \frac12 \, \|\Lambda^{\frac{\alpha}{2}} G\|_{L^2}^2  \, + C\, \|G\|_{L^2}^2 + C\,\|\Lambda^{\frac{\beta}{2}}\theta\|_{L^2}^2.
$$
Inserting the bounds for $J_1$ and $J_2$ to (\ref{g2root}) and applying Gronwall's inequality yield the desired bound. This completes the proof of
Lemma \ref{le:106}.
\end{proof}

\vskip .3in
\section{On the $L^6$ bound for $G$}
\label{sec:proof}

This section proves Proposition \ref{prop:Lq}, which provides a
global $L^6$-bound for $G$. Once Proposition \ref{prop:Lq} is established,
Theorem \ref{main} can be proven similarly to \cite{JMWZ}.  We nevertheless  give a brief outline of its proof here.

\vskip .1in
 Before we start proving Proposition \ref{prop:Lq}, let us prepare with the following observation. By the estimates for the evolution of $\theta$ in  \eqref{L2theta}, we have control of $\|\theta(t)\|_{L^\infty}$  and $\|\La^{(1-\al)/2}\theta\|_{L^2_t L^2_x}$. By Gagliardo-Nirenberg's inequality, we have that for all $\gamma\in (0,1/2)$,
\begin{equation}
\label{320}
\|\La^{\ga(1-\al)}\theta\|_{L^{\f{1}{\ga}}_t L^{\f{1}{\ga}}_x}\leq
\|\La^{(1-\al)/2}\theta\|_{L^2_t L^2_x}^{2\ga} \|\theta_0\|_{L^\infty_{t x}}^{1-2\ga}.
\end{equation}

\begin{proof}[Proof of Proposition \ref{prop:Lq}]
We consider $\ga\in (0,1/2)$, to be fixed momentarily and also define $\al_{cr}$ be the solution to
$
(2-\ga)(1-\al)=\f{\al}{2}
$
or
$$
\al_{cr}=\f{4-2 \ga}{5-2\ga}.
$$
Note that for each $\al>\al_{cr}$, we have that $ (2-\ga)(1-\al)<\f{\al}{2}$. We henceforth assume $\al>\al_{cr}$.

In view of Lemma \ref{le:106}, it suffices to consider the case $q=6$.
By multiplying \eqref{Geqin} by $G |G|^{4}=G^{5}$ and integrating in $x$,
we obtain
\begin{equation}
\label{60}
\frac{1}{q} \pp_t\|G(t)\|_{L^6}^6+ \int G^{5}\,\La^{\al} G\,dx =\int G^{5}  [\ral, u\cdot \nabla] \theta dx\, + \int G^{5} \La^{1-2\al} \pp_1 \theta dx.
\end{equation}
By the maximum principle of \cite{CC} and Sobolev embedding, we have
$$
 \int G^{5}\, \La^{\al} G\,dx\geq C \int |\La^{\frac{\al}{2}} G^{3}|^2 dx\geq C\|G\|_{L^{\frac{12}{2-\al}}}^6.
$$
Next, we deal with the  second term on the right hand side of \eqref{60}.  Note that $\pp_1\La^{-1}$ is the Riesz transform in the first variable, which is bounded on all $L^p, 1<p<\infty$ spaces. We have by H\"older's inequality and the Kato-Ponce estimate \eqref{27}
\begin{eqnarray*}
\left|\int G^{5} \La^{1-2\al} \pp_1 \theta dx \right|
&\leq & \|\La^{\ga(1-\al)} \theta\|_{L^{\f{1}{\ga}}_x}
\|\pp_1 \La^{-1} \La^{(2-\ga)(1-\al)}(G^{5})\|_{L^{\frac{1}{1-\ga}}}\\
&\leq & C   \|\La^{\ga(1-\al)} \theta\|_{L^{\f{1}{\ga}}_x} \|\La^{(2-\ga)(1-\al)} G\|_{L^2}
\|G\|_{L^{\f{8}{1-2\ga}}}^4 \\
&\leq & C   \|\La^{\ga(1-\al)} \theta\|_{L^{\f{1}{\ga}}_x} \|G\|_{H^{\al/2}} \|G\|_{L^{\f{8}{1-2\ga}}}^4.
\end{eqnarray*}
where in the last line, we have used that $\al>\al_{cr}$.

\vskip .1in
By writing $u=u_G+u_\theta$ as in (\ref{uGuT}), the first term on
the right hand side of \eqref{60} is split into two terms.
We have, according to  \eqref{310}, for every  $s>(2-\ga)(1-\al)$,
\begin{eqnarray*}
  \left|\int G^{5}  [\ral, u_\theta\cdot \nabla] \theta dx\right| &\leq &  C_s
\|\La^{\ga(1-\al)} \theta\|_{L^{\f{1}{\ga}}}  \|\theta\|_{L^\infty} \|G^5\|_{W^{s,\f{1}{1-\ga}}} \\
&\leq & \|\La^{\ga(1-\al)} \theta\|_{L_x^{\f{1}{\ga}}} \|\theta_0\|_{L^\infty}
\|G\|_{H^s}
\|G\|_{L^{\f{8}{1-2\ga}}}^4.
\end{eqnarray*}
We now pick $s$ so close to $(2-\ga)(1-\al)$, so that $(2-\ga)(1-\al)<s<\al/2$. This is possible, because $\al>\al_{cr}$. Thus,
$$
\left|\int G^{5}   [\ral, u_\theta\cdot \nabla] \theta dx \right|\leq  C   \|\La^{\ga(1-\al)} \theta\|_{L^{\f{1}{\ga}}_x}
\|G\|_{H^{\al/2}} \|G\|_{L^{\f{8}{1-2\ga}}}^4.
$$

\vskip .1in
We now handle the term $\int G^{5}  [\ral, u_G\cdot \nabla] \theta dx$.
By \eqref{300},
$$
\left|\int G^5[\mathcal{R}_\al, u_G\cdot\nabla]\theta\right| \le \|\Lambda^{(1-\gamma)(1-\al)+\rho} (G^5)\|_{L^{p_1}} \, \|G\|_{L^6} \, \|\Lambda^{\gamma\,(1-\al)} \theta\|_{L_x^{\frac1\gamma}},
$$
where $\rho>0$ is a small parameter and
\begin{align}
\frac1{p_1}=\frac56 -\gamma. \label{hh}
\end{align}
By the Kato-Ponce estimate and the Gagliardo-Nirenberg inequality,
\begin{align}
\|\Lambda^{(1-\gamma)(1-\al)+\rho} (G^5)\|_{L^{p_1}}  \le &\,  C\,\|\Lambda^{(1-\gamma)(1-\al)+\rho} G\|_{L^{p_2}} \, \|G\|_{L^{4p_3}}^4 \notag\\
\le & \,C\,\|G\|_{H^{\frac{\al}{2}}}^a \, \|G\|_{L^6}^{1-a}\, \|G\|_{L^6}^{4b} \, \|G\|_{L^{\frac{12}{2-\al}}}^{4(1-b)},
\label{uges}
\end{align}
where $a, b\in (0,1)$ and $p_2, p_3\in (1,\infty)$
satisfy
\begin{align}
& (1-\gamma)(1-\al)+\rho = \frac{\al}{2}\, a, \label{adef}\\
& \frac1{p_2} =
\frac{(1-\gamma)(1-\al)+\rho}{2} +  a \left(\frac12-\frac{\al}{4}\right) +
\frac16(1-a) = \frac{1+2a}{6}, \notag\\
&\frac1{p_3} = \frac1{p_1} -\frac1{p_2} =\frac{2-a}{3} -\gamma, \notag\\
&\frac{1}{4 p_3} = \frac16 b + (1-b) \frac{2-\al}{12}, \quad\mbox{or}\quad b=1-\frac1{\alpha}(a+3\gamma).  \label{bdef}
\end{align}
We return later in the proof to check that $a$ and $b$ satisfy
$a, b\in (0,1)$.
Combining (\ref{320}) and (\ref{uges}) yields
\begin{align*}
&\left|\int G^5[\mathcal{R}_\al, u_G\cdot\nabla]\theta\right| \le C\, \|G\|_{L^{\frac{12}{2-\al}}}^{4(1-b)}\,\|G\|_{H^{\frac{\al}{2}}}^a \,\|\Lambda^{\frac{1-\al}{2}}\theta\|_{L^2_{x}}^{2\gamma}\,\|G\|_{L^6}^{2-a+4b}\\
&\quad \le \frac18\, \|G\|_{L^{\frac{12}{2-\al}}}^6 + C\, \left(\|G\|_{H^{\frac{\al}{2}}}^a \,\|\Lambda^{\frac{1-\al}{2}}\theta\|_{L^2_{x}}^{2\gamma}\right)^{(\frac{3}{2(1-b)})'}\,
\|G\|_{L^6}^{(2-a+4b)(\frac{3}{2(1-b)})'},
\end{align*}
where $(\frac{3}{2(1-b)})'$ denotes the conjugate
index of $\frac{3}{2(1-b)}$.
We collect all the estimates for the right hand
side of \eqref{60} to obtain
\begin{eqnarray*}
\pp_t \|G\|_{L^6}^6+ C \|G\|_{L^{\frac{12}{2-\al}}}^6 &\leq &
C \|\La^{\ga(1-\al)} \theta\|_{L^{\f{1}{\ga}}} \|G\|_{H^{\al/2}} \|G\|_{L^{\f{8}{1-2\ga}}}^4  \\
&+& C\,\left(\|G\|_{H^{\frac{\al}{2}}}^a \,\|\Lambda^{\frac{1-\al}{2}}\theta\|_{L^2_{x}}^{2\gamma}\right)^{(\frac{3}{2(1-b)})'}\,
\|G\|_{L^6}^{(2-a+4b)(\frac{3}{2(1-b)})'}.
\end{eqnarray*}
By the Gagliardo-Nirbenberg  inequality, we have
\begin{eqnarray*}
\|G\|_{L^{\f{8}{1-2\ga}}} &\leq & \|G\|_{L^6}^{\beta_1} \|G\|_{L^{\f{12}{2-\al}}}^{1-\beta_1},
\end{eqnarray*}
where $\beta_1=\beta_1(\ga, \al)$ is determined from
\begin{equation} \label{440}
\f{1-2\ga}{8}=\f{\beta_1}{6}+(1-\beta_1)\f{2-\al}{12} \quad \mbox{or}\quad \beta_1 =  \f{12}{\al}\left(\f{1-2\ga}{8}+\f{\al-2}{12}\right).
\end{equation}
Note that our $\ga, \al$ need to be such that
$\beta_1(\gamma, \al)\in (0,1)$.  We return later in the proof
to check this. We invoke (\ref{320}) and apply Young's inequality to obtain
\begin{eqnarray*}
&&  \|\La^{\ga(1-\al)} \theta\|_{L_x^{\f{1}{\ga}}} \|G\|_{H^{\al/2}} \|G\|_{L^{\f{8}{1-2\ga}}}^4 \\
&& \qquad \le \frac18 \|G\|_{L^{\f{12}{2-\al}}}^{6} +
C\, (\|\Lambda^{\frac{1-\al}{2}} \theta\|^{2\gamma}_{L_{x}^2} \|G\|_{H^{\al/2}} \|G\|_{L^6}^{4\beta_1})^{(\f{3}{2(1-\beta_1)})'}.
\end{eqnarray*}
Thus,
\begin{eqnarray*}
\pp_t \|G\|_{L^6}^6 &\leq & C\,\left(\|G\|_{H^{\frac{\al}{2}}}^a \,\|\Lambda^{\frac{1-\al}{2}}\theta\|_{L^2_{x}}^{2\gamma}\right)^{(\frac{3}{2(1-b)})'}\,
\|G\|_{L^6}^{(2-a+4b)(\frac{3}{2(1-b)})'}  \\
&+& C(\|\Lambda^{\frac{1-\al}{2}}\theta\|^{2\gamma}_{L_{x}^2}
\|G\|_{H^{\al/2}})^{(\f{3}{2(1-\beta_1)})'}
\|G\|_{L^6}^{4\beta_1(\f{3}{2(1-\beta_1)})'}.
\end{eqnarray*}
In order to close the argument, the indices should satisfy
\begin{eqnarray}
\label{400}
& & (2-a+4b)(\frac{3}{2(1-b)})' \le 6; \qquad  4\beta_1(\f{3}{2(1-\beta_1)})' \le 6;\\
\label{410}
& &(a + 2 \gamma) (\frac{3}{2(1-b)})' \le 2; \qquad (\f{3}{2(1-\beta_1)})'\leq \f{2}{2\ga+1}.
\end{eqnarray}
Indeed, by Young's inequality, we have
$$
\left(\|G\|_{H^{\frac{\al}{2}}}^a \,\|\Lambda^{\frac{1-\al}{2}}\theta\|_{L^2_{x}}^{2\gamma}\right)^{(\frac{3}{2(1-b)})'}
\le C(\|G\|_{H^{\al/2}}^2 + \|\Lambda^{\frac{1-\al}{2}}\theta\|_{L^2_{x}}^{2}),
$$
$$
(\|\Lambda^{\frac{1-\al}{2}}\theta\|^{2\gamma}_{L_{x}^2}
\|G\|_{H^{\al/2}})^{(\f{3}{2(1-\beta_1)})'} \le C(\|G\|_{H^{\al/2}}^2 + \|\Lambda^{\frac{1-\al}{2}}\theta\|_{L^2_{x}}^{2})
$$
and
$$
\|G\|_{L^6}^{(2-a+4b)(\frac{3}{2(1-b)})'} \leq C(1 + \|G\|_{L^6}^{6}),
\qquad \|G\|_{L^6}^{4\beta_1(\f{3}{2(1-\beta_1)})'} \leq C(1 + \|G\|_{L^6}^{6}).
$$
This implies the differential inequality
$$
\pp_t \|G\|_{L^6}^6
\leq\,  C(1 + \|G\|_{L^6}^{6})\left(\|G\|_{H^{\al/2}}^2 + \|\Lambda^{\frac{1-\al}{2}}\theta\|_{L^2_{x}}^{2}\right).
$$
Applying Gronwall's then yields
$$
\|G(T)\|_{L^6}^6\leq (1+\|G(0)\|_{L^6}^6) \exp(A(T)),
$$
where
$$
A(T)=\int_0^T \left(\|G\|_{H^{\al/2}}^2 + \|\Lambda^{\frac{1-\al}{2}}\theta\|_{L^2_{x}}^{2}\right) dt <\infty.
$$
This finishes the argument.

\vskip .1in
It remains to analyze the inequalities \eqref{400} and \eqref{410}.
It turns out that \eqref{400} is always satisfied. The first inequality in
\eqref{400} in the same as
$$
2-a + 4b \le 6\left(1-\frac{2(1-b)}{3}\right) =2 + 4b,
$$
which is trivially true for $a>0$. The second inequality in \eqref{400}
is always true as well,
$$
4 \beta_1 \le 6\left(1-\frac23(1-\beta_1)\right) = 4 \beta_1 + 2.
$$
The first condition in \eqref{410} is simplified to
$$
a + 2 \gamma \le \frac23 + \frac43 b,
$$
or, according to (\ref{adef}) and (\ref{bdef}),
$$
\left(1+\frac4{3\alpha}\right) \frac2\al ((1-\gamma)(1-\al)+\rho) + \left(2+\frac4{\alpha}\right) \gamma \le 2.
$$
Since $\rho$ can be taken as small as we wish, we can further reduce this inequality to
$$
6(1-\gamma)\al^2 + (1-7\gamma) \al -4 (1-\gamma) > 0,
$$
which is equivalent to
$$
\al > \al_1(\gamma)\equiv \frac{-(1-7\gamma)+\sqrt{(1-7\gamma)^2+96(1-\gamma)^2}}{12(1-\gamma)}.
$$
The second condition in \eqref{410}, namely, $(\f{3}{2(1-\beta_1)})'\leq \f{2}{2\ga+1}$ is equivalent to $\f{3}{2(1-\beta_1)}\geq \f{2}{1-2\ga}$ or $\beta_1\geq  \f{1+6\ga}{4}$.  Using \eqref{440} for $\beta_1$ yields
$$
\al\geq \f{12\ga+2}{3-6\ga}.
$$

\vskip .1in
This means that $\al$ needs to satisfy the following inequalities
$$
\al > \max\left(\al_1(\gamma), \f{12\ga+2}{3-6\ga}, \f{4-2 \ga}{5-2\ga}\right).
$$
The smallest value of this maximum is achieved  for  $\ga_0=\f{43-\sqrt{1777}}{36}$ so that the value of $\al_{cr}$ is minimized and we get
$$
\al_{cr}=\f{4-2 \ga_0}{5-2\ga_0}=\f{12\ga_0+2}{3-6\ga_0}=\f{\sqrt{1777}-23}{24}=0.798103...
$$
 Finally, recall that we also need  to check that $a, b, \beta_1\in (0,1)$ for $\gamma=\gamma_0$ and $\al\in (\al_{cr},1)$.
The picture below verifies this.
This completes the proof of Proposition \ref{prop:Lq}.
\begin{figure}[ht]
\centering
\includegraphics[width=15cm,height=7cm]{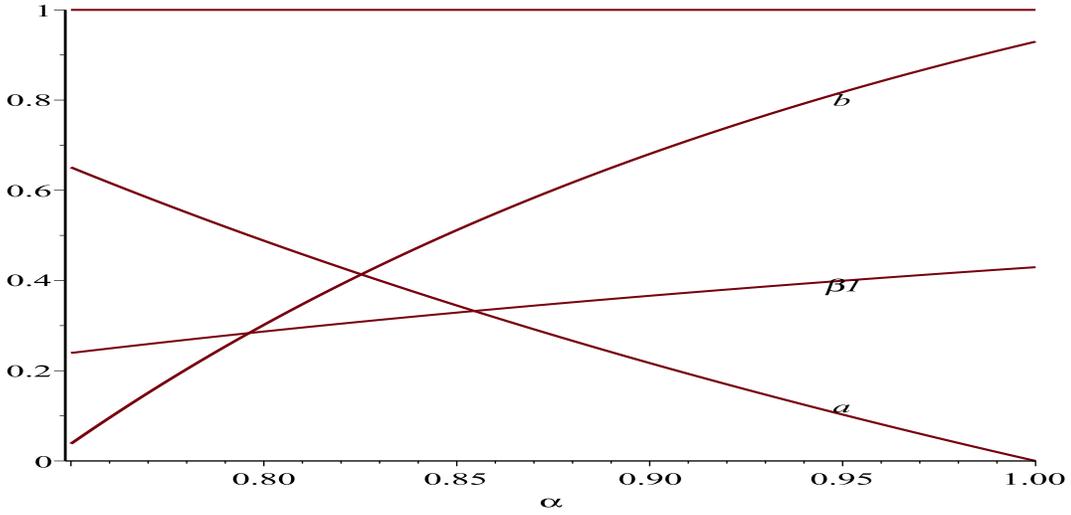}
\caption{Graphics   of $a, b, \beta_1$ for $\gamma=\gamma_0$ and $\al\in (\al_{cr}, 1)$.}
\label{fig}
\end{figure}

\end{proof}

\vskip .1in
We now briefly sketch the proof of Theorem \ref{main}.
\begin{proof}[Proof of Theorem \ref{main}] The global existence and
smoothness of solutions is proven via two steps. The first step is the
local well-posedness of (\ref{BQE}), which can be established through a
standard procedure (see, e.g., \cite{MB,Wu}). The second step extends the
local solution of the first step into a global one through
{\it a priori} estimates. Proposition \ref{prop:Lq} provides a
global $L^q$-bound for $G$ for any $2\le q\le 6$. As in Proposition 7.1
in \cite{JMWZ}, we can show that, for any $0\le s \le 3\alpha-2$,
$$
\sup_{0\leq t\leq T} \|G(t)\|_{B^s_{6, \infty}}\leq C(T, u_0, \theta_0).
$$
Recall the embedding, $B^{3\alpha-2}_{6, \infty}(\mathbb{R}^2)\hookrightarrow
B^0_{\infty,1}(\mathbb{R}^2)$ for $\alpha>\frac79$. Therefore, for $\alpha>\f{\sqrt{1777}-23}{24}>\frac79$
$$
\|\nabla u_G\|_{L^\infty} =\|\nabla \nabla^\perp \De^{-1} G\|_{L^\infty}
\le \|G\|_{B^0_{\infty,1}} \le \|G\|_{B^s_{6, \infty}}.
$$
This yields a global Lipschitz bound on $u_G$. The rest of the proof is the same as in \cite{JMWZ}. We thus omit further details. This completes the proof of
Theorem \ref{main}.
\end{proof}

\vskip .4in
\section*{Acknowledgements}
Stefanov's research is partially supported by NSF grant DMS 1313107. Wu is partially supported by NSF grant DMS 1209153 and the AT\&T Foundation at Oklahoma State University. Wu thanks Professors Quansen Jiu, Changxing Miao and Zhifei Zhang for discussions and thanks Drs. Haifeng Shang and Zhuan Ye for comments.

\vskip .4in

\end{document}